\documentclass[12pt, reqno]{amsart}
\usepackage{graphicx} 
\usepackage[margin=1in]{geometry}            
\usepackage{graphicx}
\usepackage{amsmath, amssymb,amscd,amsthm}
\usepackage{epstopdf}
\usepackage{tikz}
\usepackage[utf8]{inputenc}
\usepackage{xcolor}
\usepackage{soul,dirtytalk}
\usepackage[colorlinks=true, pdfstartview=FitV, linkcolor=blue, citecolor=blue, urlcolor=blue]{hyperref}
\DeclareGraphicsRule{.tif}{png}{.png}{`convert #1 `dirname #1`/`basename #1 .tif`.png}

\usepackage{tikz-cd}
\usepackage[nameinlink]{cleveref}
\usepackage{comment}
\usepackage[colorinlistoftodos]{todonotes}

\title{Lifting Frobenius splittings through geometric vertex decomposition}

\author[De Negri]{Emanuela De Negri}
\address{Dipartimento di Matematica, 
Universit\`a di Genova,
Genova, Italy}
\email{denegri@dima.unige.it}
\author[Gorla]{Elisa Gorla}
\address{Department of Mathematics, University of Neuchatel, Neuchatel, Switzerland}
\email{elisa.gorla@unine.ch}
\author[Klein]{Patricia Klein}
\address{Department of Mathematics, Texas A\&M University, College Station, TX, USA}
\email{pjklein@tamu.edu}
\author[Rajchgot]{Jenna Rajchgot}
\address{Department of Mathematics and Statistics, McMaster University, Hamilton, ON, Canada}
\email{rajchgoj@mcmaster.ca}
\author[Seccia]{Lisa Seccia}
\address{Department of Mathematics, University of Neuchatel, Neuchatel, Switzerland}
\email{lisa.seccia@unine.ch}

\thanks{Klein was partially supported by NSF DMS-2246962. Rajchgot was partially supported by NSERC Discovery Grants 2017-05732 and 2023-04800. Seccia was partially supported by SNSF grant TMPFP2\_217223. This project began at the workshop Women in Commutative Algebra II, for which funding was provided by the Association for Women in Mathematics, the Atlantic Association for Research in the Mathematical Sciences, the Centre International de Mathématiques Pures et Appliquées, the Clay Mathematics Institute, the Committee for Developing Countries of the European Mathematical Society, and NSF grant DMS–2324929. Both hospitality and financial support were provided by the Centro Internazionale per la Ricerca Matematica in Trento, Italy.}
\date{\today}

\newtheorem{Theorem}{Theorem}[section]

\newtheorem{rmk}[Theorem]{Remark}

\newtheorem{defn}[Theorem]{Definition}
\newtheorem{prop}[Theorem]{Proposition}
\newtheorem{thm}[Theorem]{Theorem}
\newtheorem*{thm*}{Theorem}

\newtheorem*{MainThm}{Main Theorem}
\theoremstyle{remark}

\newenvironment{ex}
  {\pushQED{\qed}\examplex}
  {\popQED\endexamplex}

\newcommand{\init}{\mathrm{in}}
\newcommand{\hgt}{\mathrm{ht}}
\newcommand{\Tr}{\mathrm{Tr}}
\newcommand{\ass}{\mathrm{Ass}}

\DeclareMathOperator{\del}{del}
\DeclareMathOperator{\lk}{lk}

\definecolor{darkblue}{rgb}{0.0,0,0.7}

\newcommand{\newword}[1]{\textcolor{darkblue}{\textbf{\emph{#1}}}}

\begin{document}

\begin{abstract}
Frobenius splitting, pioneered by Hochster and Roberts in the 1970s and Mehta and Ramanathan in the 1980s, is a technique in characteristic $p$ commutative algebra and algebraic geometry used to control singularities.  In the aughts, Knutson showed that Frobenius splittings of a certain type descend through Gr\"obner degeneration of a certain type, called geometric vertex decomposition. In the present paper, we give a partial converse to Knutson's result.  We show that a Frobenius splitting that compatibly splits both link and deletion of a geometric vertex decomposition can, under an additional hypothesis on the form of the splitting, be lifted to a splitting that compatibly splits the original ideal. We discuss an example showing that the additional hypothesis cannot be removed. Our argument uses the relationship between geometric vertex decomposition and Gorenstein liaison developed by Klein and Rajchgot.  Additionally, we show that Li's double determinantal varieties defined by maximal minors are Frobenius split.
\end{abstract}

\maketitle

\section{Introduction}

Frobenius splitting is a tool in characteristic $p>0$ commutative algebra and algebraic geometry that has played a central role in controlling singularities over the past five decades.  Hochster and Roberts \cite{HR74} used splittings of the Frobenius morphism to prove the famous Hochster-Roberts Theorem, which states that the ring of invariants of the action of a linearly reductive group on a regular ring is Cohen--Macaulay.  Later, Mehta and Ramanathan \cite{MR85} named and further studied Frobenius splitting, establishing foundational results on vanishing of cohomology for Frobenius split varieties, giving criteria for splitting, and showing that Schubert varieties are Frobenius split.

Fedder \cite{Fed83} and Singh \cite{Sin99} showed that the property of being F-split, in contrast to other desirable algebraic properties such as being Cohen--Macaulay or Gorenstein, does not deform.  However, it does have some favorable interaction with degeneration.  Knutson \cite{Knu09}, building on work of Lakshmibai, Mehta, and Parameswaran \cite{LMP98}, studied Frobenius splitting in relation to Gr\"obner degeneration.  

If $\varphi$ is a splitting of the ring $R$ with ideal $I$ so that $\varphi(I) \subseteq I$, we say that $\varphi$ compatibly splits $I$.  For a polynomial $f$, let $\init_{x_n}(f)$ denote the sum of the terms of highest degree in $x_n$, and let $\init_{x_n}(I) = (\init_{x_n}(f) \mid f \in I)$.  The following is part of \cite[Theorem 2]{Knu09}:

\begin{thm*}[\cite{Knu09}]
    Let $f \in S=\kappa[x_1, \ldots, x_n]$ be of degree $n$, and let $<$ be a lexicographic term order with $x_n$ largest.  If $\init_<(f) = x_1 \cdots x_n$, then $f$ induces a splitting of $S$.  If this splitting compatibly splits $I$, then $\init_{x_n}(f)$ induces a splitting that compatibly splits $\init_{x_n}(I)$, $\init_{x_n}(I):x_n$, and $\init_{x_n}(I)+(x_n)$. 
\end{thm*}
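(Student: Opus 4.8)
\emph{Proof plan.} The plan is to prove the first assertion by Fedder's criterion and the second by a one-parameter Gr\"obner degeneration, lifting the splitting from the special fiber to the total space and then specializing it back down.

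First recall that over a perfect field $\kappa$ of characteristic $p$ the module $\Hom_{S^p}(S,S^p)$ is free of rank one on the map $\Tr$ sending a monomial $x_1^{a_1}\cdots x_n^{a_n}$ to $x_1^{b_1}\cdots x_n^{b_n}$ when every $a_i\equiv -1\pmod p$ (writing $a_i=pb_i+(p-1)$) and to $0$ otherwise; a polynomial $g$ induces a splitting exactly when $\Tr(g)=1$. I would first take $g=f^{p-1}$. Since $<$ is lexicographic with $x_n$ largest and $\init_<(f)=x_1\cdots x_n$, the variable $x_n$ occurs in $f$ only to the first power, say $f=x_nh+f_0$ with $h,f_0\in\kappa[x_1,\dots,x_{n-1}]$, $\init_<(h)=x_1\cdots x_{n-1}$, $\deg h=n-1$ (using $\deg f=n$), and $\init_{x_n}(f)=x_nh$. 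As $\deg f^{p-1}=n(p-1)$ and any monomial with all exponents $\equiv -1\pmod p$ has degree $\ge n(p-1)$, with equality only for $(x_1\cdots x_n)^{p-1}$, and since $(x_1\cdots x_n)^{p-1}=\init_<(f)^{p-1}=\init_<(f^{p-1})$ occurs in $f^{p-1}$ with coefficient $1$, we get $\Tr(f^{p-1})=1$. The same argument inside $\kappa[x_1,\dots,x_{n-1}]$ (degree $(n-1)(p-1)$, unique monomial $(x_1\cdots x_{n-1})^{p-1}$) gives $\Tr(h^{p-1})=1$, hence $\Tr\big((\init_{x_n}f)^{p-1}\big)=\Tr\big(x_n^{p-1}h^{p-1}\big)=1$; write $\psi:=\Tr\big((\init_{x_n}f)^{p-1}\cdot-\big)$ and $\varphi:=\Tr\big(f^{p-1}\cdot-\big)$. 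Note that $\psi$ compatibly splits $(x_n)$: for any $g$, $\psi(x_ng)=\Tr(x_n^p h^{p-1}g)=x_n\,\Tr(h^{p-1}g)\in(x_n)$, since $p$-th powers pull out of $\Tr$.

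For compatibility with $\init_{x_n}(I)$ I would set up the Gr\"obner degeneration along the weight $w=(0,\dots,0,1)$: let $\tilde I\subseteq R:=S[t]$ be the $w$-homogenization of $I$ (equivalently, homogenize a Gr\"obner basis for a $w$-refining term order); it is standard that $R/\tilde I$ is flat over $\kappa[t]$, with fiber $S/I$ over $t=1$ and fiber $S/\init_{x_n}(I)$ over $t=0$, so $t$ is a nonzerodivisor on $R/\tilde I$ and $(\tilde I+(t))/(t)\cong\init_{x_n}(I)$ under $R/(t)=S$. Since $\deg_{x_n}f=1$ the homogenization is $\tilde f=x_nh+tf_0$, with $\tilde f|_{t=1}=f$ and $\tilde f|_{t=0}=\init_{x_n}(f)$. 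Put $\Phi:=\Tr_R\big((t\tilde f)^{p-1}\cdot-\big)$. Expanding $(t\tilde f)^{p-1}=\sum_{i}\binom{p-1}{i}t^{2(p-1)-i}x_n^i h^i f_0^{\,p-1-i}$, the only term with all exponents $\equiv -1\pmod p$ is the $i=p-1$ term $t^{p-1}x_n^{p-1}h^{p-1}$, on which $\Tr_R$ recovers $\Tr_S(x_n^{p-1}h^{p-1})=1$; so $\Phi$ is a splitting of $R$, and divisibility of $(t\tilde f)^{p-1}$ by $t^{p-1}$ gives $\Phi(tg)=t\,\Tr_R(\tilde f^{p-1}g)\in(t)$, so $\Phi$ compatibly splits $(t)$.

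The crux is to show $\Phi$ compatibly splits $\tilde I$; this is where the hypothesis that $\varphi$ compatibly splits $I$ enters. I would invert $t$ and use the $\mathbb{G}_m$-equivariance of the construction: the $\kappa[t^{\pm1}]$-algebra automorphism $\sigma$ of $R_t=S[t^{\pm1}]$ with $x_n\mapsto x_n/t$ (and $x_i,t$ fixed) carries $\tilde I\,R_t$ to $I\otimes_\kappa\kappa[t^{\pm1}]$ and, after tracking the Jacobian factor $t^{-(p-1)}$, conjugates $\Phi$ to $\Tr_S(f^{p-1}\cdot-)\otimes\Tr_{\kappa[t^{\pm1}]}(t^{p-1}\cdot-)=\varphi\otimes\varphi_t$; since $\varphi\otimes\varphi_t$ compatibly splits $I\otimes\kappa[t^{\pm1}]$ exactly when $\varphi$ compatibly splits $I$, we get $\Phi(\tilde I)\subseteq\tilde I\,R_t$. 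As $t$ is a nonzerodivisor on $R/\tilde I$, this forces $\Phi(\tilde I)\subseteq\tilde I\,R_t\cap R=\tilde I$, so $\Phi$ compatibly splits $\tilde I$, hence also $\tilde I+(t)$. Reducing modulo $t$, the splitting $\Phi$ descends to one of $R/(t)=S$ which, using $\tilde f|_{t=0}=\init_{x_n}(f)$ and that only the $t$-constant term of $\Tr_R(t^{p-1}\tilde f^{p-1}a)$ survives at $t=0$, one computes to be exactly $\psi$; and it compatibly splits $(\tilde I+(t))/(t)=\init_{x_n}(I)$. Finally, $\psi$ compatibly splits $\init_{x_n}(I)+(x_n)$ because sums of compatibly split ideals are compatibly split and $\psi$ splits $(x_n)$; and since compatibly split ideals are radical with compatibly split minimal primes, $\init_{x_n}(I):x_n=\bigcap_{x_n\notin P}P$, the intersection over the minimal primes $P$ of $\init_{x_n}(I)$, is compatibly $\psi$-split as well. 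I expect the main obstacle to be precisely this crux step — the bookkeeping in the $\mathbb{G}_m$-conjugation (in particular pinning down the Jacobian factor so that the conjugate of $\Phi$ lands on the nose on $\varphi\otimes\varphi_t$) together with the standard but necessary facts that homogenizing a Gr\"obner basis computes $\tilde I$ and that $R/\tilde I$ is $\kappa[t]$-flat.
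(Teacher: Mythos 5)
This statement is \cite[Theorem~2]{Knu09} and is cited in the paper without proof, so there is no internal argument to compare against; I can only assess your proposal on its merits and against Knutson's original.

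Your overall plan is sound and the individual pieces check out, but with one concrete bookkeeping error in the crux step. The Fedder-style degree count correctly shows $\Tr(f^{p-1})=1$ and $\Tr\big((x_nh)^{p-1}\big)=1$, and the observation that $\deg_{x_n}f=1$ follows correctly from $\init_<(f)=x_1\cdots x_n$ under lex with $x_n$ largest. In the degeneration step, however, the automorphism you wrote down is inverted. With the standard $w$-homogenization $\tilde g = t^{d}\,g(x_1,\dots,x_{n-1},x_n/t)$ (so that $\tilde g|_{t=0}=\init_{x_n}(g)$ and $\tilde g|_{t=1}=g$), the automorphism carrying $\tilde I R_t$ to $I R_t$ is $\sigma'\colon x_n\mapsto tx_n$, not $x_n\mapsto x_n/t$: indeed $\sigma'(\tilde g)=t^{d}g$, whereas $x_n\mapsto x_n/t$ sends $\tilde g$ to $t^{d}g(x_1,\dots,x_{n-1},x_n/t^2)$, which is not proportional to $g$. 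Interestingly, the Jacobian factor $t^{-(p-1)}$ you quoted is the correct one for $\sigma'$ (a direct computation on monomials gives $\sigma'_*\Tr_R=\Tr_R(t^{1-p}\cdot-)$), so after fixing the direction of $\sigma$ everything lines up: $\sigma'(t\tilde f)=t^2f$, so the conjugate of $\Phi$ is $\Tr_R(t^{-(p-1)}t^{2(p-1)}f^{p-1}\cdot-)=\Tr_R(t^{p-1}f^{p-1}\cdot-)=\varphi\otimes\varphi_t$, which preserves $IR_t$ because $\varphi$ preserves $I$; pulling back and intersecting with $R$ (using that $t$ is a nonzerodivisor on $R/\tilde I$) gives $\Phi(\tilde I)\subseteq\tilde I$, and the computation $\Phi|_S=\psi$ is correct since only the $i=p-1$ term in the binomial expansion survives the trace. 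The closing deductions for $\init_{x_n}(I)+(x_n)$ and $\init_{x_n}(I):x_n$ then follow at once from \Cref{prop-intersect-decompose}. Note that Knutson's own argument is a more elementary, direct comparison of the two traces (never introducing the degeneration parameter $t$); your route is heavier --- you need flatness of the Rees family, that localization commutes with trace, and the equivariance bookkeeping --- but it is more conceptual and correct once the direction of $\sigma$ is fixed.
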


We will make \say{induces a splitting} precise after defining the trace map (see \Cref{def:traceMap}).  That is, Frobenius splittings of a certain form respect lexicographic Gr\"obner degeneration. Specifically, this theorem shows that Frobenius splittings of an appropriate form descend through a geometric vertex decomposition, as defined in \cite{KMY09} (see \Cref{subs:gvd}). 

The goal of the present paper is to give a partial converse to \cite[Theorem 2]{Knu09} and to describe an obstruction to a full converse. Fix an ideal $I$ of $S$, and set $C = \init_{x_n}(I):x_n^\infty$.  Let $N'$ be the contraction of $\init_{x_n}(I)+(x_n)$ to $\kappa[x_1, \ldots, x_{n-1}]$ and $N = N'S$. Our main theorem is the following:

\begin{MainThm}[\Cref{thm:main}]
    Suppose that $\init_{x_n}(I) = C \cap (N+(x_n))$ where $C \neq S$ and $C$ and $N$ share no minimal prime.  If $x_ng$ induces a splitting that compatibly splits both $C$ and $N$ and there exists $u \in C$ that does not divide zero modulo $N$ such that $u \mid g$, then there exists $r$ such that $x_ng+r$ induces a splitting that compatibly splits $I$.
\end{MainThm}

The hypothesis that $C$ and $N$ share no minimal prime is automatic if $I$ is unmixed (i.e., all associated primes of $I$ have the same height).

This theorem is saying that, if one has a Frobenius splitting which simultaneously compatibly splits the components obtained from a geometric vertex decomposition, one can lift the splitting to a different splitting that compatibly splits the ideal that was decomposed, provided the original splitting has an appropriate form.  In \Cref{ex:u-divides-g-necessary}, we show that the assumption on the existence of $u$ as in the theorem statement cannot be dropped.  Our construction relies on the relationship between geometric vertex decomposition and Gorenstein liaison developed by Klein and Rajchgot \cite{KR21}.  The element $u$ is essential to the construction of the isomorphism that, in appropriate circumstances, makes up an elementary G-biliaison, which corresponds to a pair of Gorenstein links (see \cite{Har07}).  From this perspective, we may view the relationship between geometric vertex decomposition and Frobenius splitting as mediated by the isomorphisms that constitute biliaisons.

Structure of the paper: In \Cref{sect:Background}, we review standard material on the Frobenius morphism and Frobenius splittings as well as material on geometric vertex decomposition and its connection to Gorenstein liaison. \Cref{sect:proof-of-main-thm} is devoted to the main theorem and \Cref{sect:examples} to examples of its implementation. 

In \Cref{sect:DDV}, we study double determinantal varieties defined by maximal minors and show that they are Frobenius split. The construction of the splitting that we give follows the construction in this paper's main theorem, though the main theorem itself cannot directly be applied. One purpose of this section is to showcase a phenomenon that is closely related to the main theorem, but not explained by it. We leave the project of giving a refinement of the theory developed here which gives a satisfying explanation of this example as an open problem.

\section{Background and preliminaries}\label{sect:Background}

Throughout this paper, we fix a perfect field $\kappa$ of prime characteristic $p$ and polynomial ring $S = \kappa[x_1, \ldots, x_n]$.

\subsection{Frobenius splitting and Knutson ideals}

Throughout this section, we will assume that $R$ is a ring of prime characteristic $p$. Every ring of characteristic $p$ is equipped with a ring endomorphism called the \newword{Frobenius endomorphism} $F: R\longrightarrow R$, defined by $F(r) = r^p$. Note that the Frobenius map gives $R$ the structure of a module over itself with the non-standard action defined by $r\cdot x= r^p x$. To avoid confusion, it is customary to denote this $R$-module by $F_* R$.
 
The algebraic properties of the Frobenius map reflect the geometry of $R$. For example, it is easy to see that $F$ is injective if and only if $R$ is reduced. A fundamental theorem by Kunz \cite{Kun69} states that $F$ is flat if and only if $R$ is regular. If, for example, $R$ is a quotient of a polynomial ring over a perfect field localized at a prime ideal, then $F_*R$ is a finite $R$-module and Kunz's theorem tells us that $F_*R$ is free if and only if $R$ is regular. A weaker property than being a free $R$-module is containing at least one copy of $R$ as a direct summand.

\begin{defn}
$R$ is said to be \newword{F-split} if the Frobenius map splits in the category of $R$-modules, i.e., if there exists a homomorphism $\varphi: F_*R \rightarrow R$ of $R$-modules such that $\varphi \circ F= 1_R$. Such a $\varphi$ is called a \newword{Frobenius splitting} (or \newword{F-splitting}) of $R$.
\end{defn}

The presence of even one copy of $R$ as a direct summand of $F_*R$ turns out to do a great deal of work to control singularities.  For example, if $R$ is F-split, then $R$ is weakly normal. If $R$ is the homogeneous coordinate ring of the projective variety $X$ and $R$ is Frobenius split, then Kodaira vanishing holds for $X$. See \cite{Fed83, BK05, Sch10} for further information.

\begin{defn}\label{def:traceMap}
Considering the polynomial ring $S$, let $\Tr : S \rightarrow S$ be the linear map defined on a monomial $m$ as 
  $$  \Tr(m)= 
\begin{cases}
\dfrac{\sqrt[p]{m x_1 \cdots x_n}}{x_1 \cdots x_n} \qquad \textit{if }m \prod x_i \textit{ is a $p$-th power}\\
0 \qquad \qquad \qquad \qquad\textit{otherwise}.
\end{cases}$$
 We call $\Tr:S \rightarrow S$ the \newword{trace map}. $\Tr ((x_1 \cdots x_n)^{p-1}\bullet)$ is a Frobenius splitting of $S$, known as the \newword{standard splitting}.
\end{defn}

Given a Frobenius splitting $\varphi$ of a ring $R$, it is natural to consider those ideals $I$ such that $\varphi$ descends to a Frobenius splitting of $R/I$.  An equivalent condition is $\varphi(I) \subseteq I$. Such an ideal is said to be \newword{compatibly split} by $\varphi$.

\begin{prop}[\cite{BK05}]\label{prop-intersect-decompose} 
Let $R$ be an F-split ring with Frobenius splitting $\varphi$. Then:
 \begin{itemize}
 \item $R$ is reduced.
 \item If $\varphi$ compatibly splits the proper ideal $I$, then $\varphi (I)=I$ and $I$ is radical.
 \item If the ideals $I$ and $J$ are compatibly split by $\varphi$, then so are $I+J$ and $I \cap J$.
 \item If the ideal $I$ is compatibly split by $\varphi$, then, for every ideal $J$, $I:J$ is compatibly split by $\varphi$. In particular, the minimal primes of $I$ are compatibly split by $\varphi$.
 \end{itemize}
\end{prop}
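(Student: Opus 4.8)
The plan is to deduce all four bullets from two facts only: that $\varphi$ is $R$-linear as a map $F_*R \to R$, which concretely says $\varphi(r^p y) = r\,\varphi(y)$ for all $r \in R$ and all $y$ (since the $F_*R$-action of $r$ is multiplication by $r^p$), together with the splitting relation $\varphi \circ F = \mathrm{id}_R$, i.e. $\varphi(a^p) = a$ for every $a$, and in particular $\varphi(1) = 1$. I would establish the bullets in the order listed, since each leans on its predecessors.

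For the first two bullets I would first record that, for each $e \ge 1$, the composite $\varphi^{(e)} = \varphi \circ F_*\varphi \circ \cdots \circ F_*^{\,e-1}\varphi \colon F_*^{\,e}R \to R$ is $R$-linear and satisfies $\varphi^{(e)} \circ F^e = \mathrm{id}_R$, so $\varphi^{(e)}(a^{p^e}) = a$ for all $a$. If $a^m = 0$, pick $e$ with $p^e \ge m$; then $a^{p^e}=0$, hence $a = \varphi^{(e)}(0) = 0$, proving $R$ reduced. Next, if $\varphi$ compatibly splits a proper ideal $I$, then for $a \in I$ we have $a^p \in I$ and $a = \varphi(a^p) \in \varphi(I)$, so $\varphi(I) = I$; and $\varphi$ descends to a map $\overline{\varphi}\colon F_*(R/I) \to R/I$ — well defined precisely because $\varphi(I) \subseteq I$ — which splits the Frobenius of $R/I$, so $R/I$ is F-split, hence reduced by the previous step, i.e. $I$ is radical.

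For the remaining two bullets: additivity of $\varphi$ immediately gives $\varphi(I+J) \subseteq \varphi(I)+\varphi(J) \subseteq I+J$ and $\varphi(I\cap J) \subseteq \varphi(I)\cap\varphi(J) \subseteq I\cap J$. For colons I would write $I:J = \bigcap_{b\in J}(I:(b))$ and note that a map preserving each member of a family of ideals preserves their intersection, so it suffices to treat a single $b$: if $ab \in I$ then
\[
b\,\varphi(a) = \varphi(b^p a) = \varphi\big(b^{p-1}(ab)\big) \in \varphi(I) \subseteq I,
\]
so $\varphi(a) \in I:(b)$. For the minimal primes, I would use that $I$ is now radical, so $I = P_1 \cap \cdots \cap P_r$ is the (finite, since $R$ is Noetherian) intersection of its minimal primes. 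Fixing $P = P_i$, if $r=1$ then $P=I$ is already split; otherwise put $Q = \bigcap_{j\ne i}P_j$, observe $Q \not\subseteq P$ by incomparability of distinct minimal primes, choose $s \in Q \setminus P$, and check $I:(s) = P$ (from $as \in I \subseteq P$ and $s \notin P$ one gets $a \in P$; conversely $a \in P$ gives $as \in P \cap Q = I$). Then $P$ is compatibly split by the colon case.

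The only genuinely non-formal ingredients are the identity $b\,\varphi(a) = \varphi(b^p a)$ powering the colon statement — this is exactly where $R$-linearity over the twisted module structure on $F_*R$ is used — and the observation that a minimal prime of a radical ideal is itself a colon of that ideal, which reduces the last assertion to the colon case. Everything else is bookkeeping with additivity and the relation $\varphi\circ F = \mathrm{id}_R$; the one place needing mild care is checking that compatible splitting of $I$ descends to $R/I$, which is how radicality of $I$ is bootstrapped from reducedness of $R$.
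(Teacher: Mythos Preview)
The paper does not prove this proposition; it is quoted from \cite{BK05} as background. Your argument is correct and is essentially the standard one found in that reference: reducedness from the identity $\varphi^{(e)}(a^{p^e})=a$, radicality by descending the splitting to $R/I$, stability under sums and intersections by additivity, and the colon statement via the key computation $b\,\varphi(a)=\varphi(b^{p-1}\cdot ab)$. One small caveat: your derivation of compatible splitting of minimal primes invokes finiteness of the minimal primes, i.e., a Noetherian hypothesis, which the proposition as stated does not explicitly impose; this is harmless in the paper's setting (polynomial rings over a field) and is the standing assumption in \cite{BK05}, but you should flag it if you intend the statement in full generality.
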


 When $R$ is a Noetherian ring, only finitely many ideals of $R$ are compatibly split with respect to a given Frobenius splitting $\varphi$ \cite{MR85, Schw09}. Knutson \cite{Knu09} later addressed the problem of explicitly listing these ideals for classical splittings. \Cref{prop-intersect-decompose} allows for an algorithmic approach: Starting from known compatibly split ideals, one can find others by taking minimal primes, sums, and intersections. In certain cases, this procedure yields all $\varphi$-compatibly split ideals. This motivated the following definition, formally introduced by Conca and Varbaro \cite{CV20}.

\begin{defn}
Let $f\in S$ be a polynomial such that its leading term $\init_<(f)$ with respect to some term order $<$ is $x_1 \cdots x_n$. Define $\mathcal{C}_f$ to be the smallest set of ideals satisfying the following conditions:
    \begin{enumerate}
    \item $(f)\in \mathcal{C}_f$,
    \item If $I \in \mathcal{C}_f$, then $I:J\in \mathcal{C}_f$ for every ideal $J$ of $S$,
    \item If $I,J \in \mathcal{C}_f$, then $I+J \in \mathcal{C}_f$ and $I\cap J \in \mathcal{C}_f$. 
    \end{enumerate}
    When $I \in \mathcal{C}_f$, we say that $I$ is a \newword{Knutson ideal} of $f$.
\end{defn}
Using \Cref{prop-intersect-decompose}, we see that all Knutson ideals of $f$ are compatibly split by $\Tr(f^{p-1} \bullet)$.  The condition on $\init_<(f)$ is natural in the sense that one can reduce to this case (see \cite[Section 1.2]{Knu09}).

Rather than focusing solely on the existence of a Frobenius splitting, we are interested in explicitly constructing them in order to study their compatibly split ideals, which enjoy desirable relations with each other. For example, if $I, J \in \mathcal{C}_f$, then $\init_<(I)+\init_<(J) = \init_<(I+J)$ \cite{Knu09}.  See also \cite{Se21} for the result over not-necessarily-perfect fields.

\subsection{Geometric vertex decomposition}\label{subs:gvd}

In this section, we will review geometric vertex decomposition, introduced by Knutson, Miller, and Yong \cite{KMY09}.  We refer the reader to \cite{KR21} for the  relationship between geometric vertex decomposition and Gorenstein liaison.  We direct the reader to these two sources for further context and information and, specifically, to \cite[Section 2]{KMY09} and \cite[Section 2]{KR21}.

Fix a variable $y = x_i$ of $S$ and an ideal $I\subseteq S$. Consider the weight vector $w$ that is the $i$-th standard basis vector, and define $\init_{y}(I)$ to be the (not necessarily monomial) ideal obtained by degeneration with respect to that weight vector. Set \[
C(y,I) = \init_y(I):y^\infty \qquad \mbox{and} \qquad N(y,I) = (f \in I \mid \mbox{ no term of $f$ is divisible by $y$}).
\]  If \begin{enumerate}
\item $\init_y(I) = C(y,I) \cap (N(y,I)+(y))$, and 
\item either $\sqrt{C(y,I)} = \sqrt{N(y,I)}$ or no minimal prime of $C(y,I)$ is a minimal prime of $N(y,I)$,
\end{enumerate}
then we say that $I$ admits a \newword{geometric vertex decomposition} at $y$, or that $\init_y(I) = C(y,I) \cap (N(y,I)+(y))$ is a geometric vertex decomposition of $I$ with respect to $y$. In this case, we will refer to $C(y,I)$ as the \newword{link} of $I$ at $y$ and to $N(y,I)$ as the \newword{deletion}.

If $I$ is unmixed, then condition (2) is automatically satisfied.  Condition (1) holds if and only if $I$ possesses a generating set in which each term is at most linear in $y$.  Then also $\init_y(I) = yC(y,I) + N(y,I)$.
 If $I$ is a squarefree monomial ideal, then $I$ admits a geometric vertex decomposition at $y$ if and only if the Stanley--Reisner complex of $I$ admits a vertex decomposition (in the non-pure sense) at the variable corresponding to $y$. In this case, $C(y,I)$ and $N(y,I)$ will be the ideals of the link and deletion, respectively, taken in the contracted polynomial ring which omits the generator $y$.

Considering again the weight vector $w$ that is the $i$-th standard basis vector, $\init_y(I) = C(y,I) \cap (N(y,I)+(y))$ if and only if $I$ has a $w$-Gr\"obner basis $\mathcal{G}$ of the form \[
\mathcal{G} = \{yq_1+r_1, \ldots, yq_k+r_k, h_1, \ldots, h_\ell\}
\] where $y$ does not divide any term of $q_j$, $r_j$, or $h_j$. Then $C(y,I) = (q_1, \ldots, q_k, h_1, \ldots, h_\ell)$ and $N(y,I) = (h_1, \ldots, h_\ell)$. If $<$ is a term order which refines $w$, i.e., such that $\init_<(I) = \init_<(\init_y(I))$, for example, any purely lexicographic order in which $y$ is the largest variable, and $\mathcal{G}$ is a Gr\"obner basis with respect to $<$, then the given generating sets for $C(y,I)$ and $N(y,I)$ are $<$-Gr\"obner bases as well.

If $I$ admits a geometric vertex decomposition at $y$, we call the geometric vertex decomposition \newword{degenerate} if $C(y,I) = (1)$ or if $\sqrt{C(y,I)} = \sqrt{N(y,I)}$ and \newword{nondegenerate} otherwise.  If $I$ is radical, then $N(y,I)$ is radical.  In this case, $C(y,I) = (1)$ if and only if $y$ belongs to the support of some linear form in $I$, and $\sqrt{C(y,I)} = \sqrt{N(y,I)} = N(y,I)$ if and only if $I$ has a generating set that does not involve $y$.  If $I$ is a squarefree monomial ideal, then degenerate geometric vertex decompositions correspond to deconing the associated Stanley--Reisner complex or to taking a vertex decomposition at an element of the ambient set that is not a vertex of the complex.  If $I$ is unmixed and the decomposition is nondegenerate, then $\sqrt{C(y,I)}$ is unmixed and $\hgt(I) = \hgt(C(y,I)) = \hgt(N(y,I))+1$.  

Our motivating examples in this paper will be homogeneous, unmixed ideals that can be understood via a sequence of geometric vertex decompositions that are compatible with a lexicographic term order.  The precise condition follows:

\begin{defn}
With $m \geq 0$, let $I$ be an ideal of $\kappa[x_1, \ldots, x_m]$. We say that $I$ is \newword{lex-compatibly geometrically vertex decomposable} if $I$ is unmixed and if 
\begin{enumerate}
\item $I=(1)$ or $I = 0$ or $I$ is generated by a subset of $\{x_1, \ldots, x_m\}$, or
\item $I$ admits a geometric vertex decomposition at $x_m$ and the contractions of $N(x_m,I)$ and $C(x_m,I)$ to the ring $\kappa[x_1, \ldots, x_{m-1}]$ are lex-compatibly geometrically vertex decomposable.
\end{enumerate}
\end{defn}

If $<$ is the lexicographic order on $x_1<x_2<\cdots<x_m$, then $I$ is lex-compatibly geometrically vertex decomposable if and only if $\init_<(I)$ is vertex decomposable (in the pure sense) with decompositions taken in the order specified by $<$, working greatest to least.   See \cite{KR21} for the definition and properties of geometrically vertex decomposable ideals, without the requirement of compatibility with one fixed ordering of the variables.  If $I$ is geometrically vertex decomposable (whether or not lex-compatibly), then $I$ is radical.  If moreover $I$ is homogeneous, then $I$ is a Cohen--Macaulay ideal.

\section{Proof of main theorem}\label{sect:proof-of-main-thm}

Our goal in this section is to  prove that a nondegenerate geometric vertex decomposition can, in appropriate circumstances, be used to lift a Frobenius splitting of the link and deletion to a Frobenius splitting of the ideal being decomposed (\Cref{thm:main}).  In order to give this construction, we will make use of \cite[Lemma 7.4]{KR21}, which produces a module isomorphism from a geometric vertex decomposition.  The motivation for considering this isomorphism comes from the theory of Gorenstein liaison.  It is the isomorphism itself rather than liaison theory that we will use to prove the main theorem.

The statement within \cite{KR21} has the hypotheses of a homogeneous ideal within a polynomial ring over an infinite field, and the conclusion is stated with a graded isomorphism. Here we require neither the homogeneity assumption nor the assumption of an infinite field (which, in retrospect, could have been avoided in \cite{KR21}). Below will state the version of this result that we will use here with a self-contained proof.

\begin{prop}[{\cite[Lemma 7.4, essentially]{KR21}}]\label{prop:non-homogeneous--mixed-KR}
Let $I$ be an ideal of $S$, and suppose that $\init_{x_n}(I) = C(x_n,I) \cap (N(x_n,I)+(x_n))$ is a nondegenerate geometric vertex decomposition. If $N(x_n,I)$ has no embedded primes, then 
\begin{enumerate} 
\item There exist $q \in C(x_n,I)$ and $x_nq+r \in I$ both nonzerodivisors modulo $N(x_n,I)$ with no term of $q$ or $r$ divisible by $x_n$.
\item Fix $q$ and $r$ as in Part (1). For each $f \in C(x_n,I)$, there exists $g_f\in I$ such that $f(x_nq+r)-g_fq \in N(x_n,I)$. In addition, if no term of $f$ is divisible by $x_n$, then one can choose $g_f=x_nf+s$, where no term of $s$ is divisible by $x_n$.
\item For $g_f$ as in Part (2), $\overline{g_f} \in S/N(x_n,I)$ is uniquely determined and the assignment $f \mapsto \overline{g_f}$ induces an isomorphism \[
\psi:C(x_n,I)/N(x_n,I) \rightarrow I/N(x_n,I).
\]
\end{enumerate}
\end{prop}

\begin{proof}
Write $C = C(x_n,I)$ and $N = N(x_n,I)$.

\noindent \textbf{(1)} Let $<$ be a term order such that $\init_<(I) = \init_<(\init_{x_n}(I))$. Write the reduced $<$-Gr\"obner basis of $I$ as $\mathcal{G} = \{x_nq_1+r_1, \ldots, x_nq_k+r_k, h_1, \ldots, h_\ell\}$ such that no term of any $q_i$, $r_i$, or $h_i$ is divisible by $x_n$. Set $S' = \kappa[x_1, \ldots, x_{n-1}]$, and define the $S'$-module $X = \mathrm{span}_{S'}\{q_i \mid i \in [k]\}$.

Suppose for the sake of contradiction that $X \subseteq \bigcup_{P \in \ass(N)} P$. Because each $h_j \in N \subseteq \bigcap_{P \in \ass(N)} P$, we have also $\mathrm{span}_{S'}\{q_i , h_j\mid i \in [k], j \in [\ell]\} \subseteq \bigcup_{P \in \ass(N)} P$. Because $C$ is generated by the $q_i$ and the $h_j$, none of which involve $x_n$, we may write \[
C = \bigoplus_{t \geq 0} x_n^t \cdot \mathrm{span}_{S'}\{q_i , h_j\mid i \in [k], j \in [\ell]\}.
\] Thus $C \subseteq \bigcup_{P \in \ass(N)} P$. By the Prime Avoidance Lemma, $C$ must be contained in a single prime $P \in \ass(N)$. By assumption, $N$ has no embedded primes. Hence $P$ is a minimal prime of $N$. Because $N \subset C \subseteq P$ and $P$ is minimal over $N$, $P$ must be minimal over $C$ as well.  But then $C$ and $N$ share a minimal prime, which contradicts the assumption that the decomposition is nondegenerate. 

Thus, we can fix an element $q \in X \setminus \bigcup_{P \in \ass(N)} P$. Then $q \in C$, and $q$ is not a zerodivisor modulo $N$. Write $q = s_1q_1+\cdots+s_kq_k$, $s_i \in S'$. Set $r = s_1r_1+\cdots+s_kr_k$. Then no term of $q$ or of $r$ is divisible by $x_n$, and $x_nq+r = \sum_{i=1}^k s_i(x_nq_i+r_i) \in I$.  Because $N$ has a generating set that does not involve $x_n$, so too does each of its associated primes. Hence $x_nq+r$ cannot be an element of any associated prime of $N$ unless $q$ is. Since $q$ is not, we conclude that $x_nq+r$ is not a zerodivisor modulo $N$.

\vspace{2mm}
\noindent \textbf{(2)}
Choose any expression of $f$ as an $S$-linear combination of the generators of $C$: \begin{equation}\label{eqn:f}
f = a_1q_1+\cdots+a_kq_k+b_1h_1+\cdots +b_\ell h_\ell \in C.
\end{equation}
Set \[
g_f = a_1(x_nq_1+r_1)+\cdots+a_k(x_nq_k+r_k) \in I.
\]  Now \[
f(x_nq+r)-g_fq = a_1(rq_1-qr_1)+\cdots+a_k(rq_k-qr_k)+(b_1h_1+\cdots +b_\ell h_\ell)(x_nq+r).
\] 
   But each $rq_i-qr_i = (x_nq+r)q_i-(x_nq_i+r_i)q \in (x_nq+r, x_nq_i+r_i) \subseteq I$ and has no term divisible by $x_n$, hence is an element of $N$. Also each $h_j \in N$, and so $f(x_nq+r)-g_fq \in N$, as desired.

If in addition no term of $f$ is divisible by $x_n$, in \Cref{eqn:f} one can choose $a_i$ and $b_j$ so that none of their terms involves $x_n$.
Then \[
g_f = x_n(a_1q_1+\cdots+a_kq_k)+(a_1r_1+\cdots+a_kr_k)=x_n(f-(b_1h_1+\cdots +b_\ell h_\ell))+s,
\] 
where $s = a_1r_1+\cdots+a_kr_k$ does not involve $x_n$. Since $f(x_nq+r)-g_fq\in N$ and $(x_nf+s)-g_f=x_n(b_1h_1+\cdots +b_\ell h_\ell)\in N$, then also $f(x_nq+r)-(x_nf+s)q\in N$. Moreover, since $g_f \in I$ and $x_nf+s -g_f \in N \subset I$, it follows that $x_nf+s \in I$. Therefore, $g_f$ may be replaced by $x_nf+s$, completing the proof.

\vspace{2mm}
\noindent \textbf{(3)} Suppose that there exist $g_f$ and $g'_f \in S$ so that
$f(x_nq+r)-g'_fq,  f(x_nq+r)-g_fq \in N$. Then \[
(g_f - g'_f)q  = f(x_nq+r)-g'_fq-(f(x_nq+r)-g_fq)\in N.
\] Since $q$ is a nonzerodivisor modulo $N$ from Part (1), we must have $g_f - g'_f \in N$, that is $\overline{g_f} = \overline{g'_f} \in S/N$, proving that $\overline{g_f}\in S/N$ in uniquely determined by $f$.

Uniqueness of $\overline{g_f}\in S/N$ together with the hypothesis $g_f \in I$ from Part (2) implies that the assignment $g \mapsto \overline{g_f}$ gives a well-defined map $\psi: C/N \rightarrow I/N$, which one sees easily is a map of $S$-modules. 
 
To see injectivity, suppose $\psi(\overline{f}) = \overline{0}$.  Then $g_f \in N$. Because $f(x_nq+r) - g_fq \in N$, we have $f(x_nq+r) \in N$. Since $x_nq+r$ is a nonzerodivisor modulo $N$ from Part (1), it follows that $f \in N$, which is to say that $\overline{f} = \overline{0}$.
    
To see surjectivity, because $I$ is generated over $N$ by the $x_nq_i+r_i$, $i \in [k]$, it suffices to show that the class of each $x_nq_i+r_i$ in $I/N$ is in the image of $\psi$. By construction, each such class is the image of the class of $q_i$ in $C/N$.
\end{proof}

\begin{rmk}\label{rmk:iso}
We often describe the map $\psi$ as multiplication by $(x_n q+r)/q$. If $f\nmid 0$ modulo $N$ and $f$ does not involve $x_n$, then $f(x_n q+r)-(x_n f+s)q\in N$ tells us that ($x_n f+s)/f$ and $(x_n q+r)/q$ denote the same fraction modulo $N$. From this perspective, we interpret Part (2) of \Cref{prop:non-homogeneous--mixed-KR} as saying that we may also describe $\psi$ as multiplication by $(x_nf+s)/f$. 

Notice moreover that, given $c\in C$ and $i\in S$ such that $c(x_n q+r)-iq\in N$, one has $i\in I$ and $\psi(\overline{c})=\overline{i}$. This is a consequence of the uniqueness in Part (3).
\end{rmk}

The following is a partial converse to \cite[Theorem 2]{Knu09} and the main result of this paper.

\begin{thm}\label{thm:main}
    Let $I$ be an ideal of $S$, and suppose that $\init_{x_n}(I) = C(x_n,I) \cap (N(x_n,I)+(x_n))$
    is a nondegenerate geometric vertex decomposition. Assume that $g \in S$ has no term divisible by $x_n$ and that $\Tr((x_ng)^{p-1}\bullet)$ is a Frobenius splitting of $S$ that simultaneously compatibly splits $C(x_n,I)$ and $N(x_n,I)$.  Suppose further that there exists $u \in C(x_n,I) \setminus \bigcup_{P \in \ass(N(x_n,I))} P$ such that $u \mid g$.  Then there exists $r \in S$ with no term divisible by $x_n$ such that $\Tr((x_ng+r)^{p-1} \bullet )$ compatibly splits $I$.  
    
    Specifically, $x_ng+r$ is a representative of the image of the class of $g$ under the isomorphism $\psi$ of Part (3) of \Cref{prop:non-homogeneous--mixed-KR}.
\end{thm}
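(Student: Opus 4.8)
The plan is to read $r$ off from the isomorphism $\psi$ of \Cref{prop:non-homogeneous--mixed-KR}, and then to verify the splitting property by a hands‑on computation with the trace map, using repeatedly the $F_*S$-linearity relation $\Tr(a^p\beta)=a\,\Tr(\beta)$ and the factorization $\Tr=\Tr_{S'}\otimes\Tr_{\kappa[x_n]}$, where $S'=\kappa[x_1,\dots,x_{n-1}]$.

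\emph{Setting up $r$.} Since $g$ has no term divisible by $x_n$ and $u\mid g$, both $u$ and $h:=g/u$ lie in $S'$. The ideal $N:=N(x_n,I)$ is compatibly split, hence radical, hence has no embedded primes, so \Cref{prop:non-homogeneous--mixed-KR}(4) applies to our $u$: there is $s\in S'$ with $v:=x_nu+s\in I$. I will take $r:=hs$ (which has no term divisible by $x_n$) and $f:=x_ng+r=hv$. Because $g=uh\in C:=C(x_n,I)$, the class $[g]$ lies in $C/N$ and $\psi([g])=[gv/u]=[hv]=[f]$, so $f$ represents the image of the class of $g$ under $\psi$ and $f\in(v)\subseteq I$; this already accounts for the final clause of the statement. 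Moreover $\varphi_f:=\Tr(f^{p-1}\bullet)$ is a Frobenius splitting of $S$: in $f^{p-1}=\sum_{j=0}^{p-1}\binom{p-1}{j}(x_ng)^jr^{p-1-j}$ the $j$-th summand is homogeneous of degree exactly $j$ in $x_n$, and under $\Tr=\Tr_{S'}\otimes\Tr_{\kappa[x_n]}$ all such terms vanish except $j=p-1$, so $\Tr(f^{p-1})=\Tr((x_ng)^{p-1})=1$.

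\emph{The main computation.} Fix $i\in I$ and, via \Cref{prop:non-homogeneous--mixed-KR}(5), write $iu=vc+m$ with $c\in C$ and $m\in N$. The key observation is that $u^p(hv)^{p-1}=u\,(uh)^{p-1}v^{p-1}=u\,g^{p-1}v^{p-1}$, so that multiplying by $u^p$ inside the trace produces $u\,\varphi_f(i)$ on the left while making the substitution $iu=vc+m$ available on the right:
\[
u\,\varphi_f(i)=\Tr\!\big(u^pf^{p-1}i\big)=\Tr\!\big(g^{p-1}v^{p-1}(iu)\big)=\Tr\!\big(v^p\,g^{p-1}c\big)+\Tr\!\big(g^{p-1}v^{p-1}m\big)=v\,\Tr(g^{p-1}c)+\Tr\!\big(g^{p-1}v^{p-1}m\big).
\]
Next I would locate the two summands. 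By Fedder's criterion, ``$\varphi_{x_ng}$ compatibly splits $C$'' means $(x_ng)^{p-1}C\subseteq C^{[p]}$, and since $C=C'S$ with $C'\subseteq S'$ and $g\in S'$ this is equivalent to $g^{p-1}C\subseteq C^{[p]}$; likewise $g^{p-1}N\subseteq N^{[p]}$. Because $\Tr(J^{[p]})=J$ for every ideal $J$, it follows that $\Tr(g^{p-1}c)\in C$, and since $g^{p-1}v^{p-1}m=v^{p-1}(g^{p-1}m)\in N^{[p]}$ we also get $\Tr(g^{p-1}v^{p-1}m)\in N$. Hence $u\,\varphi_f(i)\in (v)C+N$, so $u\,\varphi_f(I)\subseteq (v)C+N$.

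\emph{Conclusion.} The content of the isomorphism $\psi$ is precisely the ideal identity $(v)C+N=uI+N$: the inclusion $uI\subseteq(v)C+N$ is the relation defining $\psi$ (\Cref{prop:non-homogeneous--mixed-KR}(5)), and $(v)C\subseteq uI+N$ expresses that $\psi$ takes values in $I/N$ (\Cref{prop:non-homogeneous--mixed-KR}(2)). Therefore, for $i\in I$, we may write $u\,\varphi_f(i)=u\iota+n$ with $\iota\in I$, $n\in N$, so $u\big(\varphi_f(i)-\iota\big)\in N$; as $u$ is a nonzerodivisor modulo $N$, this gives $\varphi_f(i)-\iota\in N\subseteq I$, whence $\varphi_f(i)\in I$. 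Thus $\Tr((x_ng+r)^{p-1}\bullet)$ compatibly splits $I$. I expect the main obstacle to be the middle step: getting the trace bookkeeping right — spotting the $u^p$‑inside‑$\Tr$ trick and the simplification $u^p(hv)^{p-1}=u\,g^{p-1}v^{p-1}$ that makes \Cref{prop:non-homogeneous--mixed-KR}(5) usable — and then correctly identifying the two output summands, which forces one through Fedder's criterion and the $\Tr_{S'}\otimes\Tr_{\kappa[x_n]}$ factorization in order to pass from the ``splitting polynomial'' $x_ng$ to the bare $g$ appearing after the computation.
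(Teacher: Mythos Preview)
Your proof is correct and follows essentially the same route as the paper's: set $f=vg/u$ with $v=x_nu+s\in I$, use \Cref{prop:non-homogeneous--mixed-KR}(5) to write $iu=vc+m$, compute $u\,\Tr(f^{p-1}i)=v\,\Tr(g^{p-1}c)+\Tr(v^{p-1}g^{p-1}m)$, place the two summands in $C$ and $N$ respectively, and finish via the isomorphism $\psi$. The only real difference is packaging: where the paper verifies $\Tr(g^{p-1}c)\in C$ and $\Tr(v^{p-1}g^{p-1}m)\in N$ by a direct monomial-by-monomial analysis (peeling off the $x_n$-degree), you invoke Fedder's criterion together with the fact that $C$, $N$, and $g$ live in $S'=\kappa[x_1,\dots,x_{n-1}]$ to get $g^{p-1}C\subseteq C^{[p]}$ and $g^{p-1}N\subseteq N^{[p]}$ in one stroke---a clean shortcut yielding the same conclusion.
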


\begin{proof}
Write $C=C(x_n,I)$ and $N = N(x_n,I)$.  Because $N$ is compatibly split, $N$ must be radical and hence has no embedded primes.  Let $\psi:C/N \rightarrow I/N$ be the isomorphism guaranteed by Part (3) of \Cref{prop:non-homogeneous--mixed-KR}.  Take $u \mid g$ as in the hypotheses, and set $y = x_n$. 

By Part (2) of \Cref{prop:non-homogeneous--mixed-KR}, there exists $s\in S$ such that $yu+s \in I$ and no term of $s$ is divisible by $y$. By Parts (1) and (3) of \Cref{prop:non-homogeneous--mixed-KR}, $\psi$ is multiplication by $(yu+s)/u$. Set $f = (yu+s)g/u$ (which is an equality in $S$ because $u \mid g$). Notice that $f$ is of the form $yg+r$ where no term of $r$ is divisible by $y$.  Thus, $\init_{y}(f) = yg$, and so $\Tr(f^{p-1} \bullet)$ splits $S$ because $\Tr((yg)^{p-1} \bullet)$ does by \cite[Theorem 2(1)]{Knu09}. 

Fix $i \in I$ and let $v=yu+s$. By Part (3) of \Cref{prop:non-homogeneous--mixed-KR}, $\overline{i}=\psi(\overline{c})$ for some $c \in C$. Since $\psi$ is multiplication by $v/u$, $iu = vc$ modulo $N$, i.e.,
we may write $iu = vc+m$ for some $m \in N$.  Then
    \begin{align*}
    u \Tr(f^{p-1}i) & = \Tr(u^pf^{p-1}i)\\
    &=\Tr((uf)^{p-1}(iu))\\
    &=\Tr((vg)^{p-1}(vc+m))\\
     & = \Tr(v^pg^{p-1}c+v^{p-1}g^{p-1}m)\\
     & = v\Tr(g^{p-1}c)+\Tr(v^{p-1}g^{p-1}m).
    \end{align*}
We claim that $\Tr(g^{p-1}c) \in C$ and that $\Tr(v^{p-1}g^{p-1}m) \in N$.
    
    By additivity of the trace map and the fact that $C$ has a generating set that does not involve $y$, we may assume that $c = \mu c'$ for some monomial $\mu$ and some $c' \in C$ that does not involve $y$.  If there exist $k_i \in \mathbb{Z}_{\geq 0}$ such that $\mu = y^{p-1}y^{k_0p}x_1^{k_1p}\cdots x_n^{k_np}$, then $\Tr(g^{p-1}c) = y^{k_0}x_1^{k_1}\cdots x_n^{k_n}\Tr((yg)^{p-1}c') \in C$ by the assumption that $\Tr((yg)^{p-1} \bullet)$ compatibly splits $C$. Otherwise, clearly $\Tr(g^{p-1}c) = 0 \in C$.

    Similarly, because $N$ has a generating set that does not involve $y$, using additivity of the trace map, it suffices to consider $\Tr(g^{p-1}(yu+s)^{p-1}\nu m')$ where $\nu$ is a monomial and $m' \in N$ is a polynomial with no term divisible by $y$.  Let $d$ be the greatest power of $y$ that divides $\nu$, which we may assume is at most $p-1$.  Then the only possibly nonzero contributions to $\Tr(g^{p-1}(yu+s)^{p-1}\nu m')$ are of the form $\Tr(g^{p-1}(yu)^{p-1-d}s^d\nu m')$, each of which is an element of $N$ by the assumption that $\Tr((yg)^{p-1} \bullet)$ compatibly splits $N$. Thus, $u \Tr(f^{p-1}i) = v\gamma+\nu$ for $\gamma \in C$ and $\nu \in N$. It follows that $\Tr(f^{p-1}i) \in I$, see Part (3) of \Cref{prop:non-homogeneous--mixed-KR} and \Cref{rmk:iso}.
\end{proof}

We now give an example to show that, in \Cref{thm:main}, the hypothesis that there exists $u \in C$, not a zerodivisor modulo $N$, such that $u \mid g$ cannot be dispensed with.  

\begin{ex}\label{ex:u-divides-g-necessary}
Consider $I = (ty, tz, z(yx-s^2))$, the homogenization of an ideal studied by Fedder \cite{Fed83} and Singh \cite{Sin99} in order to show that the property of being F-split does not deform. Consider $C = C(y,I) = (xz, t)$ and $N = N(y,I) = (tz)$, both of which are compatibly split by the standard splitting, $\Tr(g^{p-1} \bullet)$ for $g = xyzst$.  The geometric vertex decomposition of $I$ at $y$ induces the isomorphism $C/N \rightarrow I/N$ which takes $zx$ to $z(yx-s^2)$.  Yet $\Tr((z(yx-s^2)st)^{p-1} \bullet )$ does not compatibly split $I$. Indeed, one can mimic Singh's argument in \cite[Proposition 3.1]{Sin99} to see that $S/I$ is not F-split at all.  The only hypothesis of \Cref{thm:main} not satisfied in this example is the existence of a factor of $g$ that is an element of $C$ and not a zerodivisor modulo $N$.
\end{ex}

\begin{rmk}\label{main-under-gvd-assumption}
Our primary application of \Cref{thm:main} will be in cases when $I$ is lex-compatibly geometrically vertex decomposable.  In this case and if $I$ is homogeneous, then $\psi$ constitutes an elementary G-biliaison. In the language of generalized divisors \cite{Har07}, one has a biliaison if and only if the divisor on $N(x_n,I)$ which corresponds to $I$ is linearly equivalent to the divisor of $C(x_n,I)$ plus a hypersurface section divisor on $N(x_n,I)$.
\end{rmk}

In the case of degenerate geometric vertex decomposition, a result similar to \Cref{thm:main} holds.

\begin{prop}\label{degenerate-version-of-main}
    Let $I$ be an ideal of $S$, and suppose that $\init_{x_n} (I) = C(x_n,I) \cap (N(x_n,I)+(x_n))$ is a degenerate geometric vertex decomposition. Assume that $g \in S$ has no term divisible by $x_n$ and that $\Tr((x_ng)^{p-1}\bullet)$ is a Frobenius splitting that simultaneously compatibly splits $C(x_n,I)$ and $N(x_n,I)$.  Then there is a linear form $\ell$ one of whose summands is $x_n$ such that $\Tr((\ell g)^{p-1} \bullet )$ compatibly splits $I$.  
\end{prop}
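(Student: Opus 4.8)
The plan is to separate the two kinds of degeneracy and to run the proof of \Cref{thm:main} in the harder one with the element $u$ there specialized to $1$. Throughout, write $C = C(x_n,I)$, $N = N(x_n,I)$, $S' = \kappa[x_1,\dots,x_{n-1}]$, and $N' = N \cap S'$ (so $N = N'S$, $I \cap S' = N'$, and $S/(N+(x_n)) \cong S'/N'$). Two preliminary points set the stage. First, $C$ and $N$ are radical by \Cref{prop-intersect-decompose}, so $\init_{x_n}(I) = C \cap (N+(x_n))$ is an intersection of radical ideals, hence radical; since this is the initial ideal of $I$ for the weight vector $e_n$, it follows (a standard fact) that $I$ is radical. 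Second, writing $\Tr'$ for the trace map of $S'$, the identity $\Tr(x_n^{p-1}P) = \Tr'(P)$ for $P \in S'$ (a direct monomial check) transfers the hypothesis to $S'$: $\Tr'(g^{p-1}\bullet)$ is a splitting of $S'$ that compatibly splits $N'$; and the same identity shows, exactly as in the proof of \Cref{thm:main}, that $\Tr((\ell g)^{p-1}\bullet)$ is a splitting of $S$ for every linear form $\ell$ with nonzero $x_n$-coefficient, since only the $x_n^{p-1}$-part of $(\ell g)^{p-1}$ contributes to the coefficient of $(x_1\cdots x_n)^{p-1}$.

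If $\sqrt{C} = \sqrt{N}$, then $C = N$ since both are radical, so $\init_{x_n}(I) = N$; since $I$ is radical, the characterization recalled in \Cref{subs:gvd} shows $I$ has a generating set not involving $x_n$, hence $I = (I\cap S')S = N'S = N$. Then $\Tr((x_ng)^{p-1}\bullet)$ already compatibly splits $I = N$, and $\ell = x_n$ works.

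It remains to treat $C = (1)$, so that $\init_{x_n}(I) = N + (x_n)$. Since $I$ is radical with $C(x_n,I) = (1)$, the characterization recalled in \Cref{subs:gvd} gives, after rescaling, a linear form $\ell = x_n + \ell' \in I$ with $\ell' \in S'$. Choosing a Gröbner basis of $I$ of the standard shape — elements $x_nq_i+r_i$ together with generators $h_j$ of $N$, no term divisible by $x_n$ — one obtains $I = (\ell) + N$ just as in the proof of \Cref{prop:non-homogeneous--mixed-KR}: each $h_j \in N$, and $r_i - \ell'q_i = (x_nq_i+r_i) - q_i\ell \in I \cap S' = N'$. Now I would mirror the proof of \Cref{thm:main} with $u = 1$ and $v = \ell$. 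For $i \in I$, write $i = \ell c + m$ with $c \in S = C$ and $m \in N$ (this is just $I = (\ell)+N$, and it replaces the use of Part (5) of \Cref{prop:non-homogeneous--mixed-KR}); then
\[
\Tr((\ell g)^{p-1}i) = \Tr(\ell^{p}g^{p-1}c) + \Tr(\ell^{p-1}g^{p-1}m) = \ell\,\Tr(g^{p-1}c) + \Tr(\ell^{p-1}g^{p-1}m).
\]
The first summand lies in $(\ell)\subseteq I$ with nothing to check, since $C = S$ absorbs $\Tr(g^{p-1}c)$. For the second, expand $\ell^{p-1} = (x_n+\ell')^{p-1}$ and $m = \sum_a x_n^a m_a$ with $m_a \in N'$; the trace map annihilates every term whose $x_n$-exponent is not $\equiv p-1\pmod p$, and every surviving term has the form $x_n^t\,\Tr'\!\big(g^{p-1}(\ell')^jm_a\big)$ with $(\ell')^jm_a \in N'$, hence lies in $x_n^tN' \subseteq N$ because $\Tr'(g^{p-1}\bullet)$ compatibly splits $N'$. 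Thus $\Tr((\ell g)^{p-1}i) \in (\ell)+N = I$, so $\Tr((\ell g)^{p-1}\bullet)$ compatibly splits $I$.

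The step I expect to be the real obstacle is producing a \emph{linear} representative of the $x_n$-direction in $I$: the Gröbner-basis bookkeeping by itself yields only some $x_n + s \in I$ with $s \in S'$ of unbounded degree, which would give a splitting of the shape $\Tr(((x_n+s)g)^{p-1}\bullet)$, of Knutson's form but not of the form $\Tr((\ell g)^{p-1}\bullet)$ with $\ell$ linear. It is the combination of the radicality of $I$ (inherited from that of $C$ and $N$) with the structural fact $C(x_n,I) = (1)$ that forces $x_n$ into the support of an honest linear form of $I$; once that is available, everything else is a routine simplification of the argument for \Cref{thm:main}, the simplification being that the link is now the unit ideal and so contributes nothing.
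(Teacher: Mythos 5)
Your proof is correct and follows the same two-case strategy as the paper: the case $\sqrt{C}=\sqrt{N}$, where $C=N$ forces $\init_{x_n}(I)=N$ and hence $I=N$, so $\ell=x_n$ works; and the case $C=(1)$, where $I=(\ell)+N$ for a linear form $\ell$ with $x_n$ in its support and the result follows from the same binomial-expansion trace computation the paper uses to show $N$ (and $(\ell)$) are compatibly split. The one place you add something is in justifying that $\ell$ can be taken to be \emph{linear}: you first observe that $I$ is radical because its $x_n$-initial ideal $C\cap(N+(x_n))$ is an intersection of radical ideals, and then invoke the characterization in \Cref{subs:gvd} that for radical $I$, $C(x_n,I)=(1)$ precisely when $x_n$ lies in the support of a linear form of $I$; the paper simply asserts the existence of such an $\ell$, so your argument fills in a step the paper leaves implicit.
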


\begin{proof}
Write $C = C(x_n,I)$ and $N = N(x_n,I)$.  If $C = S$, then $I = N+(\ell)$ for some linear form $\ell$ one of whose summands is $x_n$.  Since $\init_{x_n}(\ell g) = x_ng$, $\Tr((\ell g)^{p-1} \bullet)$ splits $S$ because $\Tr((x_ng)^{p-1} \bullet)$ does by \cite[Theorem 2(1)]{Knu09}. 
    
    Because $\ell$ is a factor of $\ell g$, $\Tr((\ell g)^{p-1} \bullet )$ compatibly splits $(\ell)$.  It suffices to show that $\Tr((\ell g)^{p-1} \bullet )$ compatibly splits $N$.  Fix $n \in N$.  Because $N$ has a generating set that does not involve $x_n$, we may assume, by additivity of the trace map, that $n = x_n^km$ for some $k \geq 0$ and $m \in N$ does not involve $x_n$.  We may further assume that $k \leq p-1$.  Write $\ell  = x_n+r$, where $r$ does not involve $x_n$.  Then $\Tr((\ell g)^{p-1}n) = \sum_{i=0}^{p-1}\Tr(\binom{p-1}{i}x_n^{i+k}g^{p-1}(r^{p-1-i}m))$, the only possibly nonzero summand of which occurs when $i+k=p-1$.  But also $\Tr((x_ng)^{p-1}r^km) \in N$ because $\Tr((x_ng)^{p-1} \bullet )$ compatibly splits $N$.

    If $C \neq S$, then, by the definition of degenerate geometric vertex decomposition, $\sqrt{N} = \sqrt{C}$.  Because $C$ and $N$ are compatibly split ideals, they are radical, and so $N = C$. Thus $I = N$, and so $I$ is compatibly split by $\Tr((\ell g)^{p-1} \bullet )$.
\end{proof}

\section{Examples}\label{sect:examples}
\subsection{Two determinantal examples}
We show two examples for the purpose of practicing the implementation of \Cref{thm:main}.

We start with a toy example, the twisted cubic. 
\begin{ex}
Let $R=k[x_0,x_1,x_2]$, let
$$X=\begin{pmatrix}
x_0 & x_1 & x_2 \\
x_1 & x_2 & x_3
\end{pmatrix}
$$
and let $I=I_2(X)$ be the ideal generated by the $2$-minors of $I$. We want to prove that $I$ is compatibly split by $\Tr(f^{p-1} \bullet )$ where
\[
f = \begin{vmatrix} x_0 & x_1\\
x_1 & x_2\end{vmatrix} \begin{vmatrix} x_1 & x_2\\
x_2 & x_3\end{vmatrix}.
\]
First we note that $I$ admits a geometric vertex decomposition of the form
\[
\init_{x_3}(I) = C \cap (N+(x_3))
\] where $C = (x_0,x_1)$ and $N=(x_0x_2-x_1^2)$. As second step, we see that $N$ admits a geometric vertex decomposition of the form
\[
\init_{x_2}(N) = C' \cap (N'+(x_2)) 
\]
with $C'=(x_0)$ and $N'=0$.  
Now $C'$ and $N'$ are compatibly split by $\Tr((x_2 g')^{p-1} \bullet)$ with $g'=x_0x_1x_3$. 
Thus by taking $u=x_0$ and applying \Cref{thm:main}, we get that $N$ is compatibly split by by $\Tr(f'^{p-1} \bullet )$ with
\[
f'=\frac{g'}{x_0}\begin{vmatrix} x_0 & x_1\\
x_1 & x_2\end{vmatrix} = \begin{vmatrix} x_0 & x_1\\
x_1 & x_2\end{vmatrix} x_1x_3.
\] As in Part (2) of \Cref{prop:non-homogeneous--mixed-KR}, we have chosen $\begin{vmatrix} x_0 & x_1\\
x_1 & x_2\end{vmatrix}$ as our representative of the image of the class of $x_0$ under the relevant isomorphism.
Write $g = f'/x_3 = \begin{vmatrix} x_0 & x_1\\
x_1 & x_2\end{vmatrix} x_1$. Now $C$ is a minimal prime over $(x_0x_2-x_1^2, x_1)$ and hence also compatibly split by $\Tr((x_3g)^{p-1} \bullet)$.  Thus we may apply \Cref{thm:main} with $u = x_1$ to conclude that $I$ is compatibly split by $\Tr(f^{p-1} \bullet )$ with
\[
f=\frac{g}{x_1}\begin{vmatrix} x_1 & x_2\\
x_2 & x_3\end{vmatrix} = \begin{vmatrix} x_0 & x_1\\
x_1 & x_2\end{vmatrix} \begin{vmatrix} x_1 & x_2\\
x_2 & x_3\end{vmatrix}.
\] Here we have lifted $x_1$ to $\begin{vmatrix} x_1 & x_2\\
x_2 & x_3\end{vmatrix}$, again as in Part (2) of \Cref{prop:non-homogeneous--mixed-KR}.
\end{ex}

Now we apply the theorem to a class of determinantal ideals. \Cref{prop:maximal-minors} itself is well known and can be established directly via \Cref{prop-intersect-decompose}, as is done in \cite{Knu09}.

For a positive integer $n$, let $X_n$ denote the $2 \times n$ generic matrix $\begin{pmatrix}x_{11} & \cdots & x_{1n}\\
x_{21} & \cdots & x_{2n}
\end{pmatrix}$.

\begin{prop}\label{prop:maximal-minors}
Fix integers $2 \leq n \leq N$.  Let $R = \kappa[x_{11}, x_{21}, \ldots, x_{1N},x_{2N}]$. The ideal $I$ generated by the $2$-minors of $X_n$ is compatibly split by $\Tr(f^{p-1} \bullet )$ where \[
f = x_{21}\begin{vmatrix} x_{11} & x_{12}\\
x_{21} & x_{22}\end{vmatrix} \begin{vmatrix} x_{12} & x_{13}\\
x_{22} & x_{23}\end{vmatrix} \cdots \begin{vmatrix} x_{1n-1} & x_{1n}\\
x_{2n-1} & x_{2n}\end{vmatrix} x_{1n}x_{1n+1}x_{2n+1} \cdots x_{1N}x_{2N}.
\]
\end{prop}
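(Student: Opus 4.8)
The plan is to prove the statement by induction on $n$, peeling off one column at a time via geometric vertex decompositions at the bottom-row variables $x_{2m}$ and invoking \Cref{thm:main} at each stage. It is convenient to carry an auxiliary ideal along the induction, so I would prove the following stronger assertion. Write $M_i = \begin{vmatrix} x_{1i} & x_{1,i+1} \\ x_{2i} & x_{2,i+1}\end{vmatrix}$ and, for $1 \le m \le N$, set
\[
f_m = x_{21}\, M_1 M_2 \cdots M_{m-1}\, x_{1m}\, x_{1,m+1}x_{2,m+1}\cdots x_{1N}x_{2N},
\]
so that $f = f_n$; let $I_m$ be the ideal generated by the $2$-minors of $X_m$ (with $I_1 = 0$), and $J_m = (x_{11}, \ldots, x_{1m})$. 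The assertion is: for every $1 \le m \le N$, $\Tr(f_m^{p-1}\bullet)$ is a Frobenius splitting of $R$ that compatibly splits both $I_m$ and $J_m$. The ideal $J_m$ is included because, as the computation below shows, it is exactly the link $C(x_{2m}, I_m)$ of the decomposition used at stage $m$, and so is needed to feed \Cref{thm:main}. Specializing to $m = n$ gives the proposition.

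Before the induction I would record two facts valid for every $m$. First, with respect to the diagonal lexicographic order $x_{11} > \cdots > x_{1N} > x_{21} > \cdots > x_{2N}$, the initial term of a product being the product of initial terms gives $\init_<(f_m) = \big(\prod_i \init_<(M_i)\big)\cdot x_{21}x_{1m}\prod_{j>m}x_{1j}x_{2j} = \prod_{i,j}x_{ij}$; hence $\Tr(f_m^{p-1}\bullet)$ is a Frobenius splitting of $R$ and every Knutson ideal of $f_m$ is compatibly split by it (see \cite{Knu09}; cf.\ \cite{CV20}). Second, $J_m$ is a Knutson ideal of $f_m$: since $R$ is a domain, $(h) = (f_m):(f_m/h) \in \mathcal{C}_{f_m}$ for every divisor $h$ of $f_m$, so $(x_{1m})$ and each $(M_i)$ with $1 \le i \le m-1$ lie in $\mathcal{C}_{f_m}$; and as $M_{j-1} \equiv x_{1,j-1}x_{2j} \pmod{(x_{1j})}$, one checks that for $j = m, m-1, \ldots, 2$ in turn
\[
\big((M_{j-1}) + (x_{1j}, \ldots, x_{1m})\big):(x_{2j}) = (x_{1,j-1}, x_{1j}, \ldots, x_{1m}) \in \mathcal{C}_{f_m},
\]
which at $j = 2$ is $J_m$. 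By \Cref{prop-intersect-decompose}, $J_m$ is then compatibly split by $\Tr(f_m^{p-1}\bullet)$.

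For the induction, the base $m = 1$ is immediate ($I_1 = 0$; $J_1 = (x_{11})$ by the above), so assume the assertion at $m-1$ and apply \Cref{thm:main} to $I_m$ with $x_{2m}$ in the role of $x_n$ (the bottom entry is forced: $f_m$ is linear in $x_{2m}$ but quadratic in $x_{1m}$). Since $I_m$ is prime, hence unmixed, with natural generators at most linear in $x_{2m}$, it admits a geometric vertex decomposition at $x_{2m}$; and as the $2$-minors form a Gr\"obner basis with respect to the diagonal order above (which refines the weight selecting $x_{2m}$), one reads off $C(x_{2m}, I_m) = (x_{11}, \ldots, x_{1,m-1}) = J_{m-1}$ (using $I_{m-1} \subseteq (x_{11}, \ldots, x_{1,m-1})$) and $N(x_{2m}, I_m) = I_{m-1}$, with the decomposition nondegenerate because $J_{m-1}$ and $I_{m-1}$ are distinct primes (heights $m-1$ and $m-2$). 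Next, $M_{m-1}$ is the only factor of $f_m$ involving $x_{2m}$ and is linear in it, so, writing $f_m = x_{2m}g_m + r_m$ with $g_m, r_m$ free of $x_{2m}$, one finds $x_{1,m-1} \mid g_m$ and, crucially, $x_{2m}g_m = f_{m-1}$. Hence $\Tr((x_{2m}g_m)^{p-1}\bullet) = \Tr(f_{m-1}^{p-1}\bullet)$ is, by the recorded facts and the inductive hypothesis, a Frobenius splitting compatibly splitting both $C(x_{2m}, I_m) = J_{m-1}$ and $N(x_{2m}, I_m) = I_{m-1}$; and $u := x_{1,m-1}$ lies in $C(x_{2m}, I_m)$, divides $g_m$, and is a nonzerodivisor modulo the prime $I_{m-1}$, so the remaining hypothesis of \Cref{thm:main} holds. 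The theorem then yields an $r$ with $\Tr((x_{2m}g_m + r)^{p-1}\bullet)$ compatible with $I_m$, where $x_{2m}g_m + r$ represents the class of $g_m$ under the isomorphism $\psi$. Running its construction with $v = M_{m-1} = x_{2m}x_{1,m-1} - x_{1m}x_{2,m-1} \in I_m$ (a valid choice of the element of Part~(4) of \Cref{prop:non-homogeneous--mixed-KR}, being of the form $x_{2m}u + (\text{terms free of }x_{2m})$) makes that representative equal to $v\, g_m/u = M_{m-1}(g_m/x_{1,m-1}) = f_m$. So $\Tr(f_m^{p-1}\bullet)$ compatibly splits $I_m$; it compatibly splits $J_m$ by the second recorded fact, closing the induction.

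I expect the crux to be this last step --- checking that the lift handed back by \Cref{thm:main} is precisely the prescribed $f_m$, and not merely some polynomial of the shape $x_{2m}g_m + r$. The identity $x_{2m}g_m = f_{m-1}$ is what makes this tractable: it says the polynomial ``$g$'' of \Cref{thm:main} at stage $m$ is, up to the single variable $x_{2m}$, the polynomial ``$f$'' of stage $m-1$, so the compatible splitting of link and deletion demanded by \Cref{thm:main} is exactly what the inductive hypothesis supplies, once one also knows $J_{m-1} \in \mathcal{C}_{f_{m-1}}$; and the explicit choices $u = x_{1,m-1}$, $v = M_{m-1}$ reproduce $f_m$ on the nose. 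The remaining, more routine work is the bookkeeping that identifies $C(x_{2m}, I_m)$ and $N(x_{2m}, I_m)$ and verifies nondegeneracy, which rests only on primeness of determinantal ideals of generic matrices and the Gr\"obner basis property of $2$-minors under a diagonal order.
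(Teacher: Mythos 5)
Your proposal is correct and takes essentially the same approach as the paper: induction on the number of columns, geometric vertex decomposition at the bottom entry of the last column with link $(x_{11},\ldots,x_{1,m-1})$ and deletion the $2$-minors of the truncated matrix, and the key identity $x_{2m}g_m = f_{m-1}$ so that the inductive hypothesis supplies the splitting of the deletion while factors of $g_m$ supply the link, with the explicit choices $u = x_{1,m-1}$ and $v$ the $2$-minor of the last two columns reproducing $f_m$ on the nose. The only cosmetic difference is in verifying the link is compatibly split: you run an explicit colon-ideal chain to exhibit it as a Knutson ideal of $f_{m-1}$, whereas the paper observes the link is a minimal prime of the complete intersection generated by the consecutive $2$-minor factors and $x_{1,m-1}$, via a height count.
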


\begin{proof}
It is well known that the $2$-minors of a $2 \times n$ generic matrix form a diagonal Gr\"obner basis, in particular a $w$-Gr\"obner basis where $w$ is the weight vector with a $1$ in position $2n$ and $0$'s elsewhere. 
Note that $I$ admits a geometric vertex decomposition of the form
\[
\init_{x_{2n}}(I) = C \cap (N+(x_{2n}))
\] where $C = (x_{11}, \ldots, x_{1n-1})$ and $N$ is the ideal generated by the $2$-minors of $X_{n-1}$.  By induction, $N$ is compatibly split by $\Tr((x_{2n}g)^{p-1} \bullet)$ where \[
g = x_{21}\begin{vmatrix} x_{11} & x_{12}\\
x_{21} & x_{22}\end{vmatrix} \begin{vmatrix} x_{12} & x_{13}\\
x_{22} & x_{23}\end{vmatrix} \cdots \begin{vmatrix} x_{1n-2} & x_{1n-1}\\
x_{2n-2} & x_{2n-1}\end{vmatrix} x_{1n-1}x_{1n}x_{1n+1}x_{2n+1} \cdots x_{1N}x_{2N}.
\] Because \[
\left(\begin{vmatrix} x_{11} & x_{12}\\
x_{21} & x_{22}\end{vmatrix}, \begin{vmatrix} x_{12} & x_{13}\\
x_{22} & x_{23}\end{vmatrix}, \cdots, \begin{vmatrix} x_{1n-2} & x_{1n-1}\\
x_{2n-2} & x_{2n-1}\end{vmatrix}, x_{1n-1}\right) \subseteq C
\] and both are ideals of height $n-1$, $C$ is a minimal prime of the left-hand ideal, which is generated by factors of $g$.  Hence, $C$ is also compatibly split by $\Tr((x_{2n}g)^{p-1} \bullet)$.  Because $\init_{x_{2n}}\left(\begin{vmatrix} x_{1n-1} & x_{1n} \\
x_{2n-1} & x_{2n}\end{vmatrix}\right) = x_{1n-1}x_{2n}$, we have that $\begin{vmatrix} x_{1n-1} & x_{1n} \\
x_{2n-1} & x_{2n}\end{vmatrix}$ is a representative of the image of the class of $x_{1n-1}$ under the isomorphism induced by the geometric vertex decomposition of $I$ at $x_{2n}$. Note also that $x_{1n-1}$ is a factor of $g$ and a nonzerodivisor modulo $N$ (which is a prime ideal).  By \Cref{thm:main}, $\Tr(f^{p-1} \bullet)$ compatibly splits $I$.
\end{proof}

\subsection{Lower bound cluster algebras}

We will next use \Cref{thm:main} to show that the defining ideals of lower bound cluster algebras are compatibly split, recovering ~\cite[Theorem 3.7]{BMRS15} (and including the case of frozen variables). For background and notation, we refer the reader to~\cite{BMRS15}.

Over our fixed perfect field $\kappa$ of prime characteristic, let $R = \kappa[x_1, \ldots, x_n, y_1, \ldots, y_n]$, equipped with the lexicographic ordering $<$ on the variables $y_1>\cdots>y_n>x_1>\cdots>x_n$.

For a seed $(Q,\mathbf{x})$, let $K_Q$ denote the defining ideal of the lower bound cluster algebra $\mathcal{L}(Q, \mathbf{x})$.  We recall a result from~\cite{MRZ18} characterizing a $<$-Gr\"obner basis of $K_Q$.  For $U \subseteq [k]$, let $U+1 = \{u+1 : u \in U\}$.

\begin{thm}\cite[Theorem 1.7]{MRZ18}\label{thm:KQ-Groebner}
    Let $Q$ be a quiver on $n$ vertices without loops or directed $2$-cycles.  Let $\mathbf{x} = \{x_{v_1}, \ldots, x_{v_n}\}$ denote the associated cluster variables. 
    The following set forms a $<$-Gr\"obner basis of $K_Q$:
    \begin{itemize}
        \item For each mutable (i.e., unfrozen) vertex $i \in [n]$, set $g_i= y_ix_i -p_i^+-p_i^-$.
        \item For each frozen vertex $i\in [n]$, set $g_i= y_ix_i-1$.
        \item For each vertex-minimal directed cycle $c$ of mutable vertices, $v_1 \rightarrow v_2 \rightarrow \cdots \rightarrow v_k \rightarrow v_{k+1} = v_1$, set \[
h_c = \sum_{\substack{U \subseteq [k]\\ U \cap (U+1) = \emptyset}} (-1)^{|U|} \left( \prod_{i \in U} \dfrac{p^+_{v_i}p^-_{v_{i+1}}}{x_{v_i}x_{v_{i+1}}}\right)\left(\prod_{i \notin U \cup (U+1)} y_{v_i}\right) - \prod_{i \in [k]} \dfrac{p^+_{v_i}}{x_{v_i}} - \prod_{i \in [k]} \dfrac{p^-_{v_i}}{x_{v_i}}.
        \]
    \end{itemize}
    Consequently, \begin{align*}
    \init_<(K_Q) &= (x_iy_i : i \in [n])+(y_{v_1}\cdots y_{v_k} :\\
    & v_1 \rightarrow v_2 \rightarrow \cdots \rightarrow v_k \rightarrow v_{k+1} = v_1 \mbox{ is a vertex-minimal cycle of mutable vertices}).
    \end{align*}
\end{thm}

\begin{prop}\label{prop:cluster}
    Let $Q$ be a quiver on $n$ vertices without loops or directed $2$-cycles.  
    The ideal $K_Q$ of $R$ is compatibly split by $\Tr(f^{p-1} \bullet)$ for $f = g_1\cdots g_n$, where $g_1,\dots, g_n$ are as in Theorem \ref{thm:KQ-Groebner}.
\end{prop}

The proof of \Cref{prop:cluster} largely consists of describing precisely an arbitrary ideal arising after executing some number of steps of geometric vertex decomposition.  Before parsing the notation, the reader may prefer to read \Cref{example:cluster}. 

\begin{proof}
As explained in~\cite[Proposition 5.2]{KR21}, one may use the results of \Cref{thm:KQ-Groebner} to infer that $K_Q$ is geometrically vertex decomposable, with decompositions determined by the order $<$.  Specifically, all ideals obtained as links or deletions after shedding at $y_1, \ldots, y_n$ are squarefree monomial ideals, split by $\Tr(f_n^{p-1} \bullet)$ for $f_n = \prod_{i \in [n]} x_iy_i$.

Fix $j \in [n]$.  Fix a subset $D \subseteq [j]$.  We will proceed via a sequence of geometric vertex decompositions, following the ideal of the deletion when $i \in D$ and the ideal of the link when $i \notin D$.  More precisely, set $I_{D,0} = K_Q$ and $I_{D,i} = \begin{cases}
    N(y_i,I_{D,i-1}), i \in D\\
    C(y_i, I_{D,i-1}), i \in [j] \setminus D
\end{cases}$ for each $i \in [j]$.
Set \[
f_j =  \left(\prod_{i \in [j+1,n]} g_i\right) \left(\prod_{i \in [j]}x_iy_i\right).
\]  We claim that, for all choices of $D$, $\Tr(f_j^{p-1} \bullet)$ compatibly splits $I_{D,j}$.  We will proceed by induction, taking the splitting of each squarefree monomial ideal $I_{D,n}$ by $f_n$ as a base case.  Taking $[0] = \emptyset$, note that $f = f_0$, and so permitting $K_Q = I_{\emptyset,0}$ yields the desired result.

Suppose the claim holds for some $0<j \leq n$.  Fix $D \subseteq [j-1]$.  By induction, $f_j$ compatibly splits $C(y_j,I_{D,j-1}) = I_{D,j}$ and $N(y_j,I_{D,j-1}) = I_{D \cup \{j\}, j}$.  
Note also that $f_j$ can be written as $f_j = y_j\tilde{f_j}$ where $\tilde{f_j}$ has no term divisible by $y_j$. 
By~\cite[Theorem 2.1]{KMY09}, $N(y_j,I_{D,j-1})$ has a Gr\"obner basis whose only elements with leading terms divisible by any $x_i$ are $\{g_{j+1}, \ldots, g_n\}$.  It follows that $x_j$ does not divide zero modulo $N(y_j,I_{D,j-1})$. Hence, $x_j \in C(y_j,I_{D,j-1})\setminus \bigcup_{P\in \ass(N(y_j,I_{D,j-1}))}P$. Observe that $x_j$ also divides $\tilde{f_j}$. 
Thus, the hypotheses of \Cref{thm:main} are satisfied.  
The isomorphism $\psi$ induced by the geometric vertex decomposition of $I_{D,j-1}$ at $y_j$ sends the class of $x_j$ to the class of $g_j$.  Hence, by \Cref{thm:main}, $\Tr(f_{j-1}^{p-1} \bullet)$ compatibly splits $I_{D,j-1}$. 
\end{proof}

Though the compatible splittings of the intermediate ideals $I_{D,j}$ are not directly discussed in other papers, they can readily be inferred by combining~\cite[Theorem 2(2)]{Knu09},~\cite[Proposition 5.8]{BMRS15}, and~\cite[Proposition 5.2]{KR21}.

\begin{ex}[Markov cluster algebra]\label{example:cluster}  We now apply \Cref{prop:cluster} to a lower bound algebra of the Markov  cluster algebra. See Figure \ref{fig:Markov} below.  In \cite[Proposition 5.3]{BMRS15}, the authors show that the Markov upper cluster algebra is strongly $F$-regular in characteristics not equal to $2$ or $3$.  This is noteworthy because the Markov cluster algebra, introduced and studied in \cite{BFZ05}, demonstrates a variety of pathologies and is a standard source of counterexamples in the cluster algebra literature.   
\begin{figure}[h]
    \centering
\begin{tikzpicture}[shorten >=1pt, node distance=1.5cm, auto, thick]
    \node[circle, draw] (1) {$x_1$}; 
    \node[circle, draw] (2) [below left of=1, yshift=-1cm] {$x_2$}; 
    \node[circle, draw] (3) [below right of=1, yshift=-1cm] {$x_3$}; 
    
    \path[->] 
    (1) edge [bend left=20, above] node {} (2)
         edge [bend right=20, below] node {} (2)
    
    (2) edge [bend left=20, above] node {} (3)
         edge [bend right=20, below] node {} (3)
    
    (3) edge [bend left=20, above] node {} (1)
         edge [bend right=20, below] node {} (1);
\end{tikzpicture}
    \caption{The quiver for the Markov cluster algebra. All vertices are mutable.}
    \label{fig:Markov}
\end{figure}
In this example, $g_1 = y_1x_1-x_2^2-x_3^2$, $g_2 = y_2x_2-x_1^2-x_3^2$, $g_3 = y_3x_3-x_1^2-x_2^2$, and, for the unique directed cycle $c = (x_1 \rightarrow x_2 \rightarrow x_3 \rightarrow x_1)$, $h_c = y_1y_2y_3-y_3x_1x_2-y_2x_1x_3-y_1x_2x_3-2x_1x_2x_3$.  Then $K_Q = (g_1, g_2, g_3, h_c)$, and the given generators are a Gr\"obner basis under the lexicographic term order determined by $y_1>y_2>y_3>x_1>x_2>x_3$.  The tree pictured in \Cref{fig:cluster-gvd} records the corresponding steps in geometric vertex decomposition.  

\begin{figure}[h]
	\begin{center}
	\scalebox{0.75}{
\begin{tikzpicture}
	\node (A) at (0,0) {
    $K_Q$};
	\node (B) at (-6,-2) {
 $( g_2, g_3 )$
    };
    \node (C) at (6,-2) {
 $( x_1, y_2y_3-x_2x_3, g_2, g_3 )$
    };
	\node (D) at (-9,-4) {
 $( g_3 )$
    };
 \node (E) at (9,-4) {
 $( y_3, x_1, x_2 )$
 };
 	\node (F) at (-3,-4) {
 $( x_2, g_3 )$
    };
 \node (G) at (3,-4) {
 $( x_1, g_3 )$
 };
  \node (H) at (-1.5,-6) {
 $( x_2, x_3 )$
 };
 \node (I) at (1.5,-6) {
 $( x_1 )$
 };
 \node (J) at (-4.5,-6) {
 $( x_2 )$
 };
  \node (K) at (4.5,-6) {
 $( x_1, x_3 )$
 };
 \node (L) at (-7.5,-6) {
 $( x_3 )$
 } ;
 \node (M) at (7.5,-6) {
 $( x_1, x_2 )$
 };
 \node (N) at (-10.5,-6) {
 $( 0 )$
 };
  \node (O) at (10.5,-6) {
 $( 1 )$
 };
	\draw[->,thick] (A) -- node[above]{$\del_{y_1}$} (B) ;
    \draw[->,thick] (A) -- node[above]{$\lk_{y_1}$} (C) ;
    \draw[->,thick] (B) -- node[above]{$\lk_{y_2}$} (F) ;
    \draw[->,thick] (B) -- node[above]{$\del_{y_2}${\color{white} $^{--}$}} (D) ;
    \draw[->,thick] (C) -- node[above]{$\lk_{y_2}$} (E) ;
    \draw[->,thick] (C) -- node[above]{$\del_{y_2}${\color{white} $^{--}$}} (G) ;
    \draw[->,thick] (D) -- node[right]{$\lk_{y_3}$} (L) ;
    \draw[->,thick] (D) -- node[left]{$\del_{y_3}$} (N) ;
    \draw[->,thick] (F) -- node[right]{$\lk_{y_3}$} (H) ;
    \draw[->,thick] (F) -- node[left]{$\del_{y_3}$} (J) ;
    \draw[->,thick] (G) -- node[left]{$\del_{y_3}$} (I) ;
    \draw[->,thick] (G) -- node[right]{$\lk_{y_3}$} (K) ;
    \draw[->,thick, color = blue] (E) -- node[left]{$\del_{y_3}$} (M) ;
    \draw[->,thick, color = blue] (E) -- node[right]{$\lk_{y_3}$} (O) ;
\end{tikzpicture}}
	\end{center}
\caption{Tree recording geometric vertex decomposition from $K_Q$, where $\del_{y_i}$ and $\lk_{y_i}$ denote application of the operator $N(y_i, \text{---})$ and $C(y_i,\text{---})$, respectively.  All geometric vertex decompositions recorded by this tree are nondegenerate except for $( y_3, x_1, x_2) = ( 1 ) \cap (( x_1, x_2) + ( y_3 ))$, which is highlighted with blue arrows.}
\label{fig:cluster-gvd}
\end{figure}
Because all leaves of the geometric vertex decomposition tree are squarefree monomials, $\Tr(f_3^{p-1} \bullet)$ simultaneously compatibly splits all of these ideals for $f_3 = y_1y_2y_3x_1x_2x_3$. 

Set $f_2 = y_1y_2x_1x_2g_3$.  By \Cref{thm:main}, $\Tr(f_2^{p-1} \bullet)$  simultaneously compatibly splits all ideals which are parents of leaves except possibly $( y_3, x_1, x_2 )$.  A straightforward intersect-decompose argument shows that indeed $\Tr(f_2^{p-1} \bullet)$ also compatibly splits $( y_3, x_1, x_2 )$.  As a caution, note that $\Tr(f_2^{p-1} \bullet)$ is not the splitting recommended by \Cref{degenerate-version-of-main}.  

Set $f_1 = y_1x_1g_2g_3$.  By \Cref{thm:main}, $\Tr(f_1^{p-1} \bullet)$ simultaneously compatibly splits both $( g_2, g_3 )$ and $( x_1, y_2y_3-x_2x_3, g_2, g_3 )$, and $\Tr((g_1g_2g_3)^{p-1} \bullet)$ compatibly splits $K_Q$.
    \end{ex}

See \cite{DGK+} for examples in other contexts where repeated use of \Cref{thm:main} can be used to build desired splittings.

\section{Frobenius splitting of Li's double determinantal ideals: Maximal minors}\label{sect:DDV}

In this section, we construct Frobenius splittings of the form $\text{Tr}(f^{p-1}\bullet)$ for double determinantal ideals in the case of maximal minors.  Double determinantal ideals, introduced by Li, are the defining ideals of a special class of Nakajima quiver varieties, which arise in the study of bases of cluster algebras (see \cite{IL25}). Double determinantal ideals are lex-compatibly geometrically vertex decomposable (by interpreting the inductive argument given in \cite{FK20}, which is phrased in terms of elementary G-biliaison, through the lens of \cite{KR21}).  Our construction of the splittings $\text{Tr}(f^{p-1}\bullet)$ is guided by the proof of \Cref{thm:main} but cannot be obtained as a consequence of \Cref{thm:main} in a natural way. We include this example to show that, in the realm of Frobenius splittings and lex-compatibly geometrically vertex decomposable ideals, there is a phenomenon that we can observe but not yet explain.  

Fix $r, m, n \geq 1$ and let $[n] = \{1, \ldots, n\}$. Let $X_k = (x^{(k)}_{i,j})$ with $1 \leq k \leq r$ be $m \times n$ matrices of distinct indeterminates and let $S =\kappa[x^{(k)}_{i,j} \mid i \in [m], j \in [n], k \in [r]]$ be the standard graded polynomial ring in the indeterminates that appear in the matrices $X_k$ over $\kappa$.

\begin{defn}
Let \(H\) be the horizontal concatenation of $X_1,\ldots,X_r$, i.e., $H$ is the \(m \times rn\) matrix $H =\begin{pmatrix}  X_1 \cdots X_r\\
\end{pmatrix}$,
and let \(V\) be their vertical concatenation, i.e., $V$ is the \(rm \times n\) matrix $V =
\begin{pmatrix}
X_{1} \\
\vdots \\
X_{r} \\
\end{pmatrix}$.  The ideal $I=I_s(H)+I_t(V)$ is called a \newword{double determinantal ideal}, and the variety cut out by $I$ is called a \newword{double determinantal variety}.
\end{defn}

In~\cite{FK20}, Fieldsteel and Klein show that the natural generators of a double determinantal ideal form a diagonal Gr\"obner basis, i.e., a Gr\"obner basis with respect to any term order under which the leading term of the determinant of any submatrix of $H$ or of $V$ is the product of terms along the main diagonal of that submatrix. In their proof, they produce a series of geometric vertex decompositions from a double determinantal ideal to an ideal generated by variables, where all the intermediate ideals in the decomposition are the analogues of one-sided mixed ladder determinantal ideals in the double determinantal setting. 

Our goal in this section is to prove that double determinantal ideals with $s=t=m=n$ are compatibly split.  The case $r = 1$ is the case of a classical determinantal ideal.  With $r \geq 2$ arbitrary, consider the  double determinantal ideal $I=I_n(H)+I_n(V)$ of maximal minors. Consider the following matrices: If $r$ is even, set
$$D^*= \begin{pmatrix}
    X_1 & X_2 & \mathbf{0}& \cdots& \mathbf{0}\\
    \mathbf{0}& X_3& X_4&\ddots& \vdots\\
    \vdots&\ddots& \ddots& \ddots&\mathbf{0}\\
    \mathbf{0}&\cdots&\mathbf{0}& X_{r-1}& X_{r}&\\
\end{pmatrix} \qquad
D_*= \begin{pmatrix}
    X_1 &  \mathbf{0}& \cdots& \mathbf{0}\\
    X_2 & X_3&\ddots& \vdots\\
    \mathbf{0}&X_4& \ddots& \mathbf{0}\\
    \vdots& &\ddots& X_{r-1}\\
    \mathbf{0}&\cdots& \mathbf{0}&X_{r}\\
    \
\end{pmatrix}
$$ And if $r$ is odd, set

$$D^*= \begin{pmatrix}
    X_1 & X_2 & \mathbf{0}& \cdots& \mathbf{0}\\
    \mathbf{0}& X_3& X_4&\ddots& \vdots\\
    \vdots&\ddots& \ddots& \ddots&\mathbf{0}\\
    &&& X_{r-2}& X_{r-1}&\\
    \mathbf{0}&\cdots&\cdots&\mathbf{0}& X_r
\end{pmatrix} \qquad
D_*= \begin{pmatrix}
    X_1 &  \mathbf{0}& \cdots&\cdots& \mathbf{0}\\
    X_2 & X_3&\ddots&& \vdots\\
    \mathbf{0}&X_4& \ddots&\ddots& \vdots\\
    \vdots& &\ddots& X_{r-2}&\mathbf{0}\\
    \mathbf{0}&\cdots& \mathbf{0}&X_{r-1}&X_r\
\end{pmatrix}$$

Define $\delta^s$ to be the diagonal of $D^*$ starting from $x^{(1)}_{1,s+1}$ for $s=1,\ldots,n-1$ and $\delta_s$ to be the diagonal of $D_*$ starting from $x^{(1)}_{s+1,1}$ for $s=1,\ldots,n-1$. Let $\Delta^s$ and $\Delta_s$ be their corresponding minors.

Before proceeding with the theorem statement and proof, we provide an example in order to practice the notation.

\begin{ex}
    With $m=n=s=t=r=3$, setting $x_{i,j}^{(2)} = y_{ij}$ and $x_{i,j}^{(3)} = z_{ij}$, write \[
H = \begin{pmatrix}
    x_{11} & x_{12} & x_{13} & y_{11} & y_{12} & y_{13} & z_{11} & z_{12} & z_{13}\\
    x_{21} & x_{22} & x_{23} & y_{21} & y_{22} & y_{23} & z_{21} & z_{22} & z_{23}\\
    x_{31} & x_{32} & x_{33} & y_{31} & y_{32} & y_{33} & z_{31} & z_{32} & z_{33}
\end{pmatrix}
    \] and \[
D^* = \begin{pmatrix}
    x_{11} & x_{12} & x_{13} & y_{11} & y_{12} & y_{13} \\
    x_{21} & x_{22} & x_{23} & y_{21} & y_{22} & y_{23} \\
    x_{31} & x_{32} & x_{33} & y_{31} & y_{32} & y_{33} \\
    0 & 0 & 0 & z_{11} & z_{12} & z_{13}\\
    0 & 0 & 0 & z_{21} & z_{22} & z_{23}\\
    0 & 0 & 0 & z_{31} & z_{32} & z_{33}
\end{pmatrix}.
    \]

    Then
    
    \[
    \Delta^1 = \begin{vmatrix} 
    x_{12} & x_{13} & y_{11} & y_{12} & y_{13}\\
    x_{22} & x_{23} & y_{21} & y_{22} & y_{23}\\
    x_{32} & x_{33} & y_{31} & y_{32} & y_{33}\\
    0 & 0 & z_{11} & z_{12} & z_{13}\\
    0 & 0 & z_{21} & z_{22} & z_{23}
    \end{vmatrix}, \qquad \mbox{ and } \qquad \Delta^2 = \begin{vmatrix} 
    x_{13} & y_{11} & y_{12} & y_{13}\\
    x_{23} & y_{21} & y_{22} & y_{23}\\
    x_{33} & y_{31} & y_{32} & y_{33}\\
    0 & z_{11} & z_{12} & z_{13}
    \end{vmatrix}.
    \]
\end{ex}

\begin{Theorem}\label{thm:DDVSplitting}
     Assume $s=t=m=n$. Define $f\in S$ to be the polynomial $f= \prod_{k=1}^r \det X_k \prod_{u=1}^{n-1} \Delta_u \Delta^u$.
Then $I=I_n(H)+I_n(V)$ is a Knutson ideal of $f$. In particular, $\varphi_f=\Tr(f^{p-1} \bullet)$ compatibly splits $I$.
\end{Theorem}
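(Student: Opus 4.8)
My plan is to prove the statement by establishing that $I = I_n(H) + I_n(V)$ is a Knutson ideal of $f$, from which the compatible splitting follows immediately by the discussion after the definition of Knutson ideals (using \Cref{prop-intersect-decompose}). The first thing I would check is the hypothesis needed to even speak of $\mathcal{C}_f$: that $\init_<(f) = $ product of all the variables for a suitable term order $<$. Here $f = \prod_k \det X_k \cdot \prod_{s=1}^{n-1} \Delta_s \Delta^s$, and with $m=n$ each $\det X_k$ contributes its main-diagonal term $\prod_i x^{(k)}_{i,i}$. The minors $\Delta^s$ (resp. $\Delta_s$) are built from the staircase matrices $D^*$ (resp. $D_*$) with diagonals $\delta^s$ (resp. $\delta_s$) shifted by $s$; I would verify combinatorially that the union over $k$ of the main diagonals of the $X_k$ together with the union over $s$ of the shifted diagonals $\delta^s, \delta_s$ covers each indeterminate $x^{(k)}_{i,j}$ exactly once, so that $\init_<(f) = \prod_{i,j,k} x^{(k)}_{i,j}$ with respect to a diagonal term order. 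This is the bookkeeping heart of why $f$ was chosen with exactly these factors.

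Next I would set up the induction. The natural generators of $I$ form a diagonal Gröbner basis by \cite{FK20}, and the proof there produces a chain of geometric vertex decompositions through one-sided mixed ladder determinantal ideals down to an ideal of variables; moreover $I$ is lex-compatibly geometrically vertex decomposable. I would argue that at each geometric vertex decomposition step, say at variable $y$, the link $C(y, J)$ and deletion $N(y, J)$ of the current ideal $J$ are each Knutson ideals of the corresponding ``smaller'' polynomial $g$ (with no term of $g$ divisible by $y$, and $\init_y(yg) = yg$), where $g$ is obtained from $f$ by dividing out the factor(s) that the decomposition ``uses up'' and the link is cut out (up to radical / minimal primes) by an ideal generated by a subset of the factors of $g$, exactly as in the proof of \Cref{prop:maximal-minors}. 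The key point, mirroring \Cref{ex:graphs}, is that the variable $y$ at which we decompose, together with a nonzerodivisor factor $u \mid g$ lying in $C(y,J)$, lifts through the induced isomorphism $\psi$ to a binomial (a determinant of a submatrix of $D^*$ or $D_*$, i.e., one of the $\Delta^s$ or $\Delta_s$) which is a factor of the larger polynomial and an element of $J$. Because $C(y,J)$ will be a minimal prime of an ideal generated by factors of $g$, it is compatibly split, hence a Knutson ideal, by the same argument as in \Cref{prop:maximal-minors}; then $J = C(y,J) \cap (N(y,J) + (y))$ up to the geometric vertex decomposition identity, and since $C$, $N$, $(y)$, and hence $N+(y)$ are all Knutson ideals of $f$, so is their intersection $\init_y(J)$; recovering $J$ itself from its initial ideal is where the structure of $\mathcal{C}_f$ and the fact (cited from \cite{Knu09}) that these operations interact well with initial ideals comes in — alternatively, one tracks the splitting $\Tr((\ell g)^{p-1}\bullet)$ up the tree via the proof technique of \Cref{thm:main} rather than \Cref{thm:main} itself.

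The main obstacle, and the reason the theorem is stated separately rather than as a corollary of \Cref{thm:main}, is precisely the last point: \Cref{thm:main} lifts a splitting of $C$ and $N$ to a splitting of $I$ only under the hypothesis that a single factor $u \mid g$ lies in $C$ and is a nonzerodivisor mod $N$, and in the double determinantal setting the clean numerics that make this work for maximal minors of a $2\times n$ matrix (\Cref{prop:maximal-minors}) or for the graph of \Cref{ex:graphs} are not literally available — the factor structure of $f$ and the identification of $C(y,J)$ as a minimal prime of a product of factors of $g$ must be re-derived by hand at each stage for the mixed ladder ideals $I_n(H') + I_n(V')$ that occur. So I expect the bulk of the work to be: (i) the combinatorial verification that $\init_<(f)$ is the product of all variables, and (ii) an inductive lemma, specific to this family, that the one-sided mixed ladder determinantal ideals appearing in the Fieldsteel–Klein decomposition are Knutson ideals of the appropriate sub-polynomial of $f$, with the explicit witnesses ($y$ and the binomial $\Delta^s$ or $\Delta_s$) exhibited at each step. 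Once that inductive lemma is in place, the conclusion $\varphi_f = \Tr(f^{p-1}\bullet)$ compatibly splits $I$ is immediate.
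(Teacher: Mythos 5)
Your approach is genuinely different from the paper's, and unfortunately it runs into the exact obstruction that the paper itself highlights. The paper's proof is direct and non-inductive: it shows that each of $I_n(H)$ and $I_n(V)$ is a Knutson ideal of $f$ by exhibiting it as a minimal prime over a sum of complete intersections whose generators are factors of $f$. Concretely, for each $i$ the ideal $J_i = (\det X_i, \det X_{i+1}, \det X_{i+2}, \Delta^1, \Delta_1, \ldots, \Delta^{n-1},\Delta_{n-1})$ is a complete intersection of height $2n+1 = \hgt I_n(H_{i,i+1,i+2})$ sitting inside $I_n(H_{i,i+1,i+2})$, so $I_n(H_{i,i+1,i+2})$ is a minimal prime of $J_i$ and hence a Knutson ideal; then a second height comparison shows $I_n(H)$ is a minimal prime over $\sum_i I_n(H_{i,i+1,i+2})$. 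The geometric vertex decomposition chain of \cite{FK20} never appears in the argument; nor does \Cref{thm:main} or its proof technique. Your point (i) — checking $\init_<(f) = \prod x^{(k)}_{i,j}$ — is a legitimate and necessary preliminary (the paper leaves it implicit), but the bulk of your plan, point (ii), diverges fundamentally.

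The gap in your proposal is the inductive lifting step. You propose to track a splitting up the Fieldsteel--Klein chain, either by finding at each stage a factor $u \mid g$ lying in $C(y,J)$ and a nonzerodivisor mod $N(y,J)$ so that the machinery of \Cref{thm:main} (or its proof) applies, or by ``recovering $J$ from $\init_y(J)$ via the structure of $\mathcal{C}_f$.'' Neither mechanism works. The paper explicitly notes, in the paragraph following the proof of \Cref{thm:DDVSplitting}, that already for $n=2$ and $r=3$ the inductive construction produces a $g$ all of whose factors lie in the deletion $N$, so the hypothesis on $u$ in \Cref{thm:main} cannot be satisfied — exactly the failure mode illustrated by the final example, where one must instead write a factor of $g$ as a sum of multiples of elements of $C \setminus N$ and treat the summands individually, a step not covered by \Cref{thm:main}. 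The alternative you suggest also fails: $\mathcal{C}_f$ is closed under sums, intersections, and colons, not under ``un-degenerating'' an initial ideal, so knowing that $\init_y(J)$ is a Knutson ideal of $f$ gives you no purchase on $J$ itself. Finding a clean inductive lemma of the kind you want is precisely the open problem the authors pose at the end of Section 5; without it, your proof plan does not close.
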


\begin{proof}
    For $i=1,\ldots, r-2$ define $H_{i,i+1,i+2}$ to be the submatrix $\begin{pmatrix}  X_i X_{i+1}X_{i+2} \end{pmatrix}$ of $H$. Using the standard formula for the height of a determinantal ideal, $\hgt (I_n(H_{i,i+1,i+2}))=2n+1$. Consider the ideal \[
    J_i= \left(\det X_i,\det X_{i+1},\det X_{i+2}, \Delta^1, \Delta_1, \ldots, \Delta^{n-1},\Delta_{n-1}\right).
    \]
    With respect to a diagonal term order, the leading terms of the given generators of $J_i$ are relatively prime.  Hence, $J_i$ is a complete intersection of height \[
    3+2(n-1)=2n+1=\hgt (I_n(H_{i,i+1,i+2})).
    \]
    Moreover, using Laplace expansion on the $\Delta$'s, we see that $J_i \subseteq I_n( H_{i,i+1,i+2})$.
    Thus $I_n( H_{i,i+1,i+2})$ is a minimal prime over $J_i$. But $J_i$ is a Knutson ideal of $f$ because its generators are factors of $f$, and so $I_n( H_{i,i+1,i+2})$ is also a Knutson ideal of $f$. 
    
    Therefore the sum \[
    I_\Sigma=I_n( H_{1,2,3})+\cdots+I_n( H_{r-2,r-1,r})
    \]
    is a Knutson ideal of $f$. Observe that $I_{\Sigma}$ contains the complete intersection generated by minors corresponding to the $rn-n+1$ distinct diagonals of $H$ and  is contained in $I_n(H)$. Since both this complete intersection and also $I_n(H)$ have height $rn-n+1$, $I_{\Sigma}$ has height $rn-n+1$ as well. Hence $I_n(H)$ is a minimal prime over the Knutson ideal $I_{\Sigma}$ and so is also a Knutson ideal of $f$ .
    
    The argument that $I_n(V)$ is a Knutson ideal of $f$ is analogous. Therefore,  the sum $I=I_n(H)+I_n(V)$ is also a Knutson ideal of $f$, as desired. 
\end{proof}

The reason why \Cref{thm:main} cannot be used to obtain \Cref{thm:DDVSplitting} inductively using the geometric vertex decomposition from \cite{FK20} is that one would be led to construct a splitting $\Tr(g^{p-1} \bullet)$ such that each factor of $g$ is in the ideal of the deletion.  In the notation of \Cref{thm:main}, one arrives at a splitting failing the hypothesis that there exist $u \mid g$ which is a nonzerodivisor modulo $N$.  This occurs already for $n=2$ and $r=3$. However, for a fixed stage in the inductive construction, with $\init_{x_{ij}^{(k)}} I = C \cap (N+(x_{ij}^{(k)}))$ and the splitting $\Tr(g^{p-1} \bullet)$, there is always a factor $u$ of $g$ which may be expressed as a sum of multiples of elements in $C$ each of which is a nonzerodivisor modulo $N$.  Applying the construction in \Cref{thm:main} to each of these elements individually, and possibly modifying by an element of $N$, produces the splittings of \Cref{thm:DDVSplitting}.  It is in this sense that the construction in \Cref{thm:DDVSplitting} is suggested by the construction of \Cref{thm:main}.

\begin{ex}
Let \[
f = \begin{vmatrix} x_{11} & x_{12}\\
x_{21} & x_{22} \end{vmatrix}\cdot \begin{vmatrix} y_{11} & y_{12}\\
y_{21} & y_{22} \end{vmatrix} \cdot \begin{vmatrix} x_{21} & x_{22} & 0\\
y_{11} & y_{12} & z_{11}\\
y_{21} & y_{22} & z_{21}\end{vmatrix} \cdot \begin{vmatrix} x_{12} & y_{11}\\
x_{22} & y_{21} 
\end{vmatrix} \cdot z_{11}z_{12}z_{22} 
\]
Set \[
H = \begin{pmatrix} x_{11} & x_{12} & y_{11} & y_{12} & z_{11}\\
x_{21} & x_{22} & y_{21} & y_{22} & z_{21} \end{pmatrix}, \qquad  V = \begin{pmatrix} x_{11} & x_{12}\\
x_{21} & x_{22}\\
y_{11} & y_{12}\\
y_{21} & y_{22}\\
z_{11} & z_{12}
\end{pmatrix}, \mbox{ and } \qquad V^- = \begin{pmatrix} x_{11} & x_{12}\\
x_{21} & x_{22}\\
y_{11} & y_{12}\\
y_{21} & y_{22}
\end{pmatrix}.
\]
Consider the ladder double determinantal ideal $I = I_2(H)+I_2(V)$.  Set $N = I_2(H)+I_2(V^-)$ and $C = (x_{11}, x_{21}, y_{11}, y_{21}, x_{12}z_{21}-z_{11}x_{22})$. Consider the geometric vertex decomposition $\init_{z_{12}}(I) = C \cap (N+(z_{12}))$.  Suppose we have established that $\Tr(f^{p-1} \bullet)$ compatibly splits the ladder double determinantal ideals $C$ and $N$, both of which are prime. Because every factor of $f$ that is an element of $C$ is also an element of $N$, we cannot apply \Cref{thm:main}.  However, we may view the factor $\Delta = x_{12}y_{21}-x_{22}y_{11}$ as a sum of multiples of the elements $y_{21}, y_{11} \in C \setminus N$.  Applying the isomorphism corresponding to the geometric vertex decomposition to each of $y_{21}$ and $y_{11}$ separately and substituting the outputs, $\begin{vmatrix} y_{21} & y_{22} \\
z_{11} & z_{12}
\end{vmatrix}$ and $\begin{vmatrix} y_{11} & y_{12} \\
z_{11} & z_{12}
\end{vmatrix}$, respectively, into $\Delta$ yields \[
\Delta' = x_{12}\begin{vmatrix} y_{21} & y_{22} \\
z_{11} & z_{12}
\end{vmatrix}-x_{22}\begin{vmatrix} y_{11} & y_{12} \\
z_{11} & z_{12}
\end{vmatrix} = \begin{vmatrix} x_{12} & y_{11} & y_{12} \\
x_{22} & y_{21} & y_{22}\\
0 & z_{11} & z_{12}
\end{vmatrix}.
\] Then $\Tr((\Delta'f/\Delta)^{p-1} \bullet)$ is the expected splitting which compatibly splits $I$, which the reader may verify via  \Cref{thm:DDVSplitting} and \cite[Theorem 2]{Knu09}.
\end{ex}
 
The authors would be very interested to see a theorem that refines or extends \Cref{thm:main} and accounts for the construction of the splittings in this section.  

\section{Acknowledgments}

The authors thank Allen Knutson and Matteo Varbaro for helpful conversations. They also thank Sara Faridi, Martina Juhnke-Kubitzke, Haydee Lindo, Elisa Postinghel, and Alexandra Seceleanu, who, together with Elisa Gorla, organized the workshop Women in Commutative Algebra II, where this project began. The authors thank Mayada Shahada for discussions concerning the content of the paper and Greg Smith for advice on Macaulay2.

\bibliographystyle{amsalpha}
\bibliography{refs}

\end{document}